\documentclass{article}
\usepackage{makeidx}
\makeindex
\usepackage{amsmath}
\usepackage{amssymb}
\usepackage{amsthm}
\usepackage{fullpage}


\begin{document}
\title{Exact solutions and critical behaviour for a linear growth-diffusion equation on a time-dependent domain}
\date{}
\author{Jane Allwright}
\maketitle

\newtheorem{theorem}{Theorem}[section]
\newtheorem{lemma}{Lemma}[section]
\newtheorem{proposition}{Proposition}[section]
\newtheorem{corollary}{Corollary}[section]
\theoremstyle{definition}
\newtheorem{remark}{Remark}[section]
\newtheorem{example}{Example}[section]
\newtheorem{definition}{Definition}[section]

\def\ds{\displaystyle}
\def\Ai{\mathop{\rm Ai}\nolimits}

{\bf Abstract}
A linear growth-diffusion equation is studied in a time-dependent interval whose location and length both vary.
We prove conditions on the boundary motion for which the solution can be found in exact form, and derive the explicit expression in each case.

Next we prove the precise behaviour near the boundary in a `critical' case: when the endpoints of the interval move in such a way that near the boundary there is neither exponential growth nor decay, but the solution behaves like a power law with respect to time.
The proof uses a subsolution based on the Airy function with argument depending on both space and time.
Interesting links are observed between this result and Bramson's logarithmic term in the nonlinear FKPP equation on the real line.

Each of the main theorems is extended to higher dimensions, with a corresponding result on a ball with time-dependent radius.

\section{Introduction}
We consider the linear reaction-diffusion (or growth-diffusion) problem:
\begin{equation}\label{eq_psi}
\frac{\partial \psi}{\partial t} = D \frac{\partial^2 \psi}{\partial x^2} +f_0\psi  \qquad \textrm{in } A(t)< x< A(t)+L(t)
\end{equation}
\begin{equation}\label{eq_psi_BC}
\psi=0 \qquad \textrm \qquad\textrm{at } x=A(t) \textrm{ and } x=A(t)+L(t)
\end{equation}
where $\psi \geq 0$ (representing a population or concentration, for example). Here, $D>0$ is the diffusion coefficient, the constant $f_0>0$ describes the growth, and there are homogeneous Dirichlet boundary conditions at the endpoints. Both the start of the interval, $A(t)$, and the length of the interval, $L(t)$, are prescribed functions of time, and we assume that both $A(t)$ and $L(t)$ are twice continuously differentiable.

The scenario of a domain with moving boundaries is relevant in the context of, for example, a species population in a habitat which changes over time. This could be due to factors such as flooding, climate change, habitat destruction, forest fire, or `re-wilding' or `re-greening' areas of land. Such phenomena mean that the domain's size, as well as location, can change with time.
While that is one motivation, there are numerous other physical applications of diffusion equations,
and this work is relevant whenever these processes occur within a spatial domain whose boundary moves due to some external influence. (It is worth remarking that this is not the same as a free boundary problem, in which the moving boundary would be determined as part of the solution.)

We treat a linear growth term: $f_0\psi$ for some $f_0>0$. This allows for certain exact solutions and precise bounds on behaviour, which are useful for understanding and evaluating the effects of a time-dependent domain, as well as having mathematical interest. Following this, we intend to treat the case of a so-called FKPP-type nonlinear term (named from the initials of the authors of \cite{Fis} and \cite{KPP}), where $f(0)=f(1)=0$, $f>0$ on $(0,1)$, and $f(k)\leq f'(0)k$. This has applications to population dynamics. The results for the nonlinear case will be discussed elsewhere. An important property of such nonlinear terms is that the solution to the linear problem (with $f_0=f'(0)$) is a supersolution. Moreover, a linearisation around the zero state can be used as an approximation to the nonlinear equation when the population density is small enough. Therefore, a thorough understanding of this linear problem (with the full time-dependence) will also be an important tool in the analysis of nonlinear problems in time-dependent domains.

Due to the importance of climate change, and its consequences for the migration of species, the topic of habitat movement in a reaction-diffusion model has been considered by several authors. (See, for example, \cite{PotLew}, \cite{RoqRoqBerKre}, \cite{BerRos1}, \cite{BerRos2}, \cite{BerDieNagZeg}, \cite{BerDesDie}, \cite{BouNad}, \cite{AlfBerRao}, \cite{WuWanZou}, and \cite{BouGil}). 
To our knowledge, all of these make the mathematically convenient assumption that climate change translates the habitat at a constant speed $c$.
Here, we consider not only the case of a fixed length $L$ and constant speed $c$, but also several other much more general moving boundaries. The domain length is able to vary with time.
The results presented here will focus, primarily, on some particular cases of $A(t)$ and $L(t)$ for which exact results can be given: explicit expressions for the solution for certain forms of $L(t)$, and precise boundary behaviour in a `critical' case.
We also extend the methods to much more general forms of $A(t)$, $L(t)$, making use of a comparison principle on a transformed version of the equation. This provides useful upper or lower bounds on the solution for a range of cases.

This paper is split into two main sections. Section \ref{section_separable} treats the cases which can be solved exactly, deriving the explicit expressions and studying some of their implications.
In section \ref{section_crit} we construct a supersolution and an Airy function subsolution, to prove precise bounds on the solution near a boundary moving with $A(t)=-2\sqrt{Df_0}t+\alpha\log(t+1)+O(1)$. This  describes a `critical' case, in the sense that as $\alpha$ varies a transition occurs between growth and decay, with the solution near the boundary behaving like a power law in $t$.

We begin by transforming
onto a fixed spatial domain. We change variables from $\psi(x,t)$ to $u(\xi,t)$ where $\xi=\frac{(x-A(t))}{L(t)}L_0$, with $L_0=L(0)$, and
obtain the variable-coefficient PDE
\begin{equation}\label{eq_u}
\frac{\partial u}{\partial t} = D \frac{L_0^2 }{L(t)^2} \frac{\partial^2 u}{\partial \xi^2}+\frac{(\dot{A}( t)L_0+\xi\dot{L}(t))}{L(t)}\frac{\partial u}{\partial \xi} +f_0 u  \qquad \textrm{in } 0< \xi< L_0
\end{equation}
\begin{equation}
u=0 \qquad \textrm\qquad\textrm{at } \xi=0 \textrm{ and } \xi=L_0
\end{equation}

In section \ref{section_separable} we introduce a further change of variables, and thus deduce conditions under which the transformed equation can be solved exactly by separation of variables, reducing to a Sturm-Liouville problem on a fixed interval. These conditions are that $\ddot{L}L^3$ and $\ddot{A}L^3$ are constants, which is when the length varies as $L(t)=\sqrt{at^2 + 2bt+L_0^2}$. The forms of $A(t)$, and of the exact solutions, depend on whether $a$ is zero or non-zero, and on the sign of $aL_0^2-b^2$. We derive the explicit expressions for $u(\xi,t)$ in each exactly-solvable case and
describe several implications of the results.
We also extend the same method to a ball in $\mathbb{R}^n$ with moving centre and time-dependent radius $R(t)$, giving exact solutions when $\ddot{R}R^3$ is constant, i.e. $R(t)=\sqrt{at^2 + 2bt+R_0^2}$.

The exact solutions show directly each of the individual factors involved in determining the overall behaviour, and they determine precisely how any initial condition will evolve over time. This is very instructive in understanding the effect of each time-dependent domain. One also sees the effect of each parameter, which gives useful insight into how any changes or uncertainty in the parameters would alter the solution. We suggest that these exact solutions could be a useful tool in comparing theoretical with ecological observations. Finally, they are a means of deducing the long-time asymptotic behaviour: we show that in some cases the solution tends uniformly to zero, in other cases it becomes arbitrarily large at each interior point, while there are also cases for which the solution grows exponentially on part of the domain while decaying elsewhere. This third sort of behaviour occurs when the interface (between the regions for growth and decay) can travel at an asymptotic speed $c_*=2\sqrt{Df_0}$ while staying within the domain.
We recall that this speed $c_*$ is also the asymptotic spreading speed for solutions to the linear and nonlinear FKPP equations on the real line with compactly supported initial conditions (see \cite{KPP}, \cite{AroWei_1}), and it is the minimum wave speed for travelling wave solutions to the FKPP equation.

In section \ref{section_general_AL}, we use comparison principles to deduce upper and lower bounds on the solution for other forms of $L(t)$ and $A(t)$. One application gives bounds whenever the domain $(A(t), A(t)+L(t))$ encloses, or is enclosed by, one of the exactly-solvable cases. A second method allows us to derive bounds whenever $\ddot{L}L^3$ and $\ddot{A}L^3$ are bounded.

Our exact solutions are (it would seem) previously unknown.
A linear growth-diffusion on an expanding domain was analysed by Simpson in \cite{Sim} (and extended to a coupled system in \cite{SimShaMorBak}). In that model the domain was itself expanding at each position $x$, to model the uniform growth of living tissue. This differs to the case considered here, where the physical points inside the domain are not being expanded, but rather the boundary of the domain is moving. This led to a different reaction-diffusion equation in \cite{Sim}:
\begin{equation}
\frac{\partial C}{\partial t} = \frac{D}{L(t)^2} \frac{\partial^2 C}{\partial \xi^2}+ (k-\sigma(t))C  \qquad \textrm{in } 0< \xi< 1 \nonumber
\end{equation}
In our own model, the additional terms in equation \eqref{eq_u} mean that more changes of variables are required in order to get the equation into a separable form (see equation \eqref{eq_w}). Moreover, the dependence of these terms on both space and time means that it is only separable under the extra conditions that $\ddot{L}L^3$ and $\ddot{A}L^3$ are constants. Our explicit solutions appear more intricate than those in \cite{Sim}, although the principle is the same. Note, also, the paper \cite{JanRoy} which considered the Schr{\"{o}}dinger equation on an interval $(0,L(t))$ of changing length, and derived necessary conditions to solve it by separation of variables.

On the topic of exact solutions to certain parabolic equations, let us also mention the works \cite{SuaSusVeg_1}, \cite{SuaSusVeg_2} by Suazo, Suslov and Vega-Guzm\'{a}n. They used transformations of variables to convert between a diffusion-type equation with variable coefficients and the heat equation, and thus they derived the fundamental solution for their class of equation. This was given in terms of
the solution $\mu(t)$ to a second order ODE, and a set of six coefficients which were themselves defined by integrals involving $\mu$, $\mu'$, and the time-dependent coefficients of the parabolic equation.

Reaction-diffusion models on domains subject to translation at a constant speed $c$ have been considered by several authors.
We note in particular the paper by Potapov and Lewis \cite{PotLew} on a two-species competition, and the paper of Berestycki (H.), Diekmann, Nagelkerke and Zegeling \cite{BerDieNagZeg} for a single species (see also \cite{RoqRoqBerKre}, \cite{BerRos1}, \cite{BerRos2}, \cite{BerDesDie}, \cite{BouNad}, \cite{AlfBerRao}, \cite{WuWanZou}, and \cite{BouGil}). These two papers considered a nonlinear reaction term, and a model on the real line with growth in a favourable region --- of a fixed length $L$ and moving at a constant speed $c$ --- and decay elsewhere. The case of Dirichlet boundary conditions on a finite interval was included as a limiting case. Several interesting results were proved regarding the dynamics on a moving domain as opposed to a stationary one. (See, especially, the observations in \cite{PotLew} regarding invasibility in a moving domain.) Both papers proved the existence of a minimal domain length $L$ needed for survival, and expressed this as a function of $c$. If $c$ was greater than a certain critical value then the solution decayed exponentially to zero regardless of the domain length. The implication was that if the climate changes too rapidly then the species is unable to keep up, and goes extinct. This critical speed, $c_*=2\sqrt{Df_0}$, features in our solutions in a similar manner.

In section \ref{section_crit}, we consider the behaviour on a domain whose endpoints move close to the critical speed $c_*$.
An analysis of one of our exact solutions suggests that a logarithmic-in-time adjustment may be the key to this.
This is further motivated by the well-known result regarding Bramson's logarithmic correction 
in relation to the nonlinear FKPP equation on the real line with compactly supported initial conditions.
In that case, it has been proven that the positions $x=\pm (c_*t - \frac{3D}{c_*}\log(t+1)+O(1))$ are the asymptotic positions at which the solution takes on any value strictly between zero and the finite stable equilibrium. Moreover, there is locally uniform convergence, at this shifted position, to the profile of the minimum speed travelling wave. This result is known as Bramson's logarithmic correction. (See \cite{Bra_1}, \cite{Bra_2} for Bramson's original proof using probabilistic arguments, or \cite{HamNolRoqRyz_2} for an alternative proof using PDEs by Hamel, Nolen, Roquejoffre and Ryzhik.)

Here, we study the behaviour near the boundary when
\begin{equation}
A(t)=-L(t)/2=-c_*t+\alpha\log(t+1)+O(1)
\end{equation}
We construct super- and sub-solutions to demonstrate 
that when $\alpha>0$, the solution at $x=A(t)+y$ (for $y=O(1)$) is `exactly of order' $y t^{-\frac{3}{2}+\frac{\alpha c_*}{2D}}$ as $t\rightarrow\infty$. (A precise statement can be found in section \ref{section_crit}.)
In particular, the `critical' boundary motion, for which the solution at $A(t)+y$ remains exactly of order $y$, is $A(t)=-c_*t+\frac{3D}{c_*}\log(t+1)+O(1)$.
This precisely matches Bramson's logarithmic term.

Our analysis uses a change of variables; a supersolution based on the principal eigenfunction of the Laplacian; and a subsolution constructed from a space-and-time-dependent Airy function $\Ai$ and its tangent at the position $\Ai(0)$.

Bramson's logarithmic term (or similar) has been seen to arise in several other circumstances. We note in particular the paper \cite{Gar}, by G\"{a}rtner, which generalised the result to the multi-dimensional case (see also \cite{RoqRosRou}), and the paper \cite{JBer}, by Berestycki (J.), Brunet and Derrida, which derived the term in the setting of a linear equation on a semi-infinite interval with a free boundary. They prescribed constant values of the function and its gradient at the free boundary, and then calculated the precise asymptotics of the boundary motion for which the prescribed conditions would be satisfied.
Again, the leading term was $c_*t$ and the next term was of order $\log(t)$.  For initial conditions with suitable decay, the coefficient of the logarithmic term was the same as in Bramson's correction. (Many subsequent terms were also calculated; see \cite{JBer}.)

To our knowledge, this is the first time that such a term has appeared in the context of the linear equation on a finite, but time-dependent, interval with Dirichlet boundary conditions.
In contrast to our own method of super- and sub-solutions,
the logarithmic correction term in \cite{HamNolRoqRyz_2} was derived using bounds on some approximate solutions together with parabolic estimates in the function spaces $L^2$ and $H_0^1$, and in \cite{JBer} it was derived using a clever integral transform method and a singularity analysis in a small parameter. These three derivations of the term are completely different; nevertheless the same logarithmic term appears in each different setting.
It is possible that some useful insight into this somewhat `universal' logarithmic term may be gained from our change of variables --- which is the source of the factor $t^{-\frac{1}{2}+\frac{\alpha c_*}{2D}}$ in the critical behaviour --- or from our super- and sub-solutions --- the source of the $yt^{-1}$ factor.

We also discuss, in section \ref{section_crit_Rn}, the extensions of this result to a ball in $\mathbb{R}^n$ with radius $R(t)=c_*t-\alpha\log(t+1)+O(1)$.

\section{Exact solutions}\label{section_separable}
In this section we state and prove the form of each exact solution.
\begin{theorem}\label{theorem_exactsolutions}
Suppose that
\begin{equation}\label{eq_L}
L(t)^2=a t^2+2b t+L_0^2 \qquad \textrm{ for some } a,b,
\end{equation}
\begin{equation}\label{eq_ddotA}
\ddot{A}(t)=\frac{\gamma_{1}}{(a t^2+2b t+L_0^2)^{3/2}}\qquad \textrm{ for some } \gamma_{1}
\end{equation}
Then, for any given initial conditions $u(\xi,0)$ in $L^2([0,L_0])$, the solution for $u(\xi,t)$ can be obtained \emph{exactly}, as a sum of $u_n(\xi,t)$ with coefficients depending only on the initial conditions. The functions $u_n$ are given by
\begin{equation}\label{eq_un_theorem}
u_n(\xi,t)=\exp\left(\sigma_n \int\limits_0^ t \frac{L_0^2}{L(\zeta)^2} d\zeta\right)g_n(\xi)\left(\frac{L_0}{L(t)}\right)^{1/2}\exp{\left(f_0t -\int\limits_0^ t \frac{\dot{A}(\zeta)^2 }{4D} d\zeta-\frac{\xi^2 \dot{L}(t)L(t)}{4DL_0^2}-\frac{\xi\dot{A}(t)L(t)}{2DL_0}\right)}
\end{equation}
where
$g_n(\xi)$ satisfies the Sturm-Liouville problem in equations \eqref{eq_SL}, \eqref{eq_SL_BC} with  $\gamma_{0}=aL_0^2-b^2$, with eigenvalue $\sigma_n$.
The explicit expressions for these exact solutions depend on whether $a$ is zero or non-zero, and on the sign of $aL_0^2-b^2$. They are given in full in equations \eqref{eq_un_L0}, \eqref{eq_un_alphac}, \eqref{eq_un_rho}, \eqref{eq_un_g0negpos}, \eqref{eq_Th_g0neg}, and \eqref{eq_Th_g0pos}.
\end{theorem}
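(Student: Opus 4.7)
The plan is to construct the solution by a sequence of explicit changes of variables that reduce \eqref{eq_u} to a separable problem on the fixed interval $(0, L_0)$. First, substitute $u(\xi, t) = h(\xi, t) w(\xi, t)$ using the integrating factor $h$ appearing in \eqref{eq_un_theorem} (i.e.\ everything there except the $\exp(\sigma_n \int L_0^2/L^2)$ and $g_n(\xi)$ pieces). Its logarithmic derivative in $\xi$, namely $-L\dot A/(2DL_0) - \xi L\dot L/(2DL_0^2)$, is designed precisely to cancel the drift coefficient $(\dot A L_0 + \xi \dot L)/L$ that multiplies $\partial u/\partial \xi$ in the original equation; the remaining purely $t$-dependent prefactor $(L_0/L)^{1/2} \exp(f_0 t - \int \dot A^2/(4D)\,d\zeta)$ is then determined by requiring the $\xi$-independent part of the transformed coefficient of $w$ to vanish. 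Collecting terms, the PDE for $w$ becomes
\[
\frac{\partial w}{\partial t} = D \frac{L_0^2}{L(t)^2} \frac{\partial^2 w}{\partial \xi^2} + \left(\frac{\xi L(t)\ddot A(t)}{2DL_0} + \frac{\xi^2 L(t)\ddot L(t)}{4DL_0^2}\right) w, \qquad w|_{\xi=0,L_0} = 0.
\]

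This is where the hypotheses enter. Differentiating $L^2 = at^2 + 2bt + L_0^2$ twice gives $\ddot L\, L^3 = aL_0^2 - b^2 =: \gamma_0$, while \eqref{eq_ddotA} is $\ddot A\, L^3 = \gamma_1$. Substituting these identities, the coefficient of $w$ factors as $(L_0^2/L(t)^2)\,\mathcal U(\xi)$ with the purely spatial polynomial $\mathcal U(\xi) = \xi\gamma_1/(2DL_0^3) + \xi^2 \gamma_0/(4DL_0^4)$. The diffusion coefficient and the potential now share the same $t$-dependence, so the time change $\tau(t) = \int_0^t L_0^2/L(\zeta)^2\,d\zeta$ converts the PDE to the autonomous problem
\[
\frac{\partial w}{\partial \tau} = D \frac{\partial^2 w}{\partial \xi^2} + \mathcal U(\xi)\, w,\qquad w|_{\xi=0,L_0}=0,
\]
which separates via $w = g(\xi) e^{\sigma \tau}$ into the Sturm-Liouville eigenproblem $D g'' + \mathcal U(\xi) g = \sigma g$ on $(0, L_0)$ with Dirichlet data. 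Since this is a regular problem on a bounded interval with a smooth, bounded potential, standard theory furnishes a complete orthonormal basis $\{g_n\}$ of $L^2([0, L_0])$ with real eigenvalues $\sigma_n \to -\infty$.

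Finally, expanding the transformed initial datum $w(\xi, 0) = u(\xi, 0)/h(\xi, 0) \in L^2([0, L_0])$ as $\sum_n c_n g_n(\xi)$, the reconstructed series $u(\xi, t) = h(\xi, t) \sum_n c_n e^{\sigma_n \tau(t)} g_n(\xi)$ agrees term-by-term with $\sum_n c_n u_n(\xi, t)$, where $u_n$ has exactly the form \eqref{eq_un_theorem}. The principal obstacle is the algebraic bookkeeping in the first step: the substitution $u = hw$ generates many cross terms mixing $\dot L, \ddot L, \dot A, \ddot A$ with powers of $\xi$, and one has to verify that, after cancellation with the precise form of $h$, the only surviving $\xi$-dependent coefficient of $w$ is the quadratic polynomial $\mathcal U(\xi)$ times $L_0^2/L(t)^2$. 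The case-by-case explicit formulas promised in the theorem ($a = 0$ versus $a \neq 0$, and the sign of $\gamma_0 = aL_0^2 - b^2$) then follow by solving the linear ODE $D g'' + \mathcal U(\xi) g = \sigma g$ with Dirichlet data in closed form, which reduces in each case to standard special functions determined by the sign and vanishing of the quadratic coefficient of $\mathcal U$.
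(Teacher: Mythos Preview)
Your approach matches the paper's proof essentially step for step: the same integrating-factor substitution \eqref{eq_w} to remove the drift, the same time change $s(t)=\int_0^t L_0^2/L(\zeta)^2\,d\zeta$, and the same reduction to the Sturm--Liouville problem \eqref{eq_SL}--\eqref{eq_SL_BC}. One small correction: the case split ($a=0$ vs.\ $a\neq 0$, and the sign of $aL_0^2-b^2$) in the explicit formulae \eqref{eq_un_L0}--\eqref{eq_Th_g0pos} is \emph{not} about solving the spatial ODE $Dg''+\mathcal U(\xi)g=\sigma g$ in closed form (the paper leaves the $g_n$ as abstract Sturm--Liouville eigenfunctions in the general case), but rather about evaluating the time integrals $\int_0^t L_0^2/L(\zeta)^2\,d\zeta$, $\int_0^t \dot A(\zeta)^2\,d\zeta$, and the double integral of $\ddot A$, whose elementary antiderivatives differ according to the discriminant of $L(t)^2$.
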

These explicit expressions determine precisely how any initial condition will evolve over time, and demonstrate each factor contributing to the behaviour. We can compare equation \eqref{eq_un_theorem} (or the specific formulae in equations \eqref{eq_un_L0}, \eqref{eq_un_alphac}, \eqref{eq_un_rho}, \eqref{eq_un_g0negpos}, \eqref{eq_Th_g0neg}, and \eqref{eq_Th_g0pos}) with the more standard case of a Fourier series solution on a fixed domain, for which
\begin{equation}\label{eq_fourier}
\tilde{u}_n(\xi,t)=\exp\left(-\frac{Dn^2\pi^2}{L_0^2}t\right) \sin\left(\frac{n\pi \xi}{L_0}\right)\exp(f_0t)
\end{equation}
The comparison is very instructive in understanding the precise effects of the time-dependent domain on the way the solution develops --- the subtleties of which would otherwise have been non-obvious.
\begin{proof}
We begin with a useful change of variables. With $u(\xi,t)$ satisfying equation \eqref{eq_u}, let
\begin{equation}\label{eq_w}
w(\xi, t)=u(\xi, t)\left(\frac{L(t)}{L_0}\right)^{1/2}\exp{\left(-f_0t +\int\limits_0^ t \frac{\dot{A}(\zeta)^2 }{4D} d\zeta+\frac{\xi^2 \dot{L}(t)L(t)}{4DL_0^2}+\frac{\xi\dot{A}(t)L(t)}{2DL_0}\right)}
\end{equation}
This removes the terms in $\frac{\partial u}{\partial \xi}$, shifting the effects of the time-dependent domain into the factors in \eqref{eq_w} and the zero-order term in the equation satisfied by $w(\xi, t)$:
\begin{equation}
\frac{\partial w}{\partial t} = D \frac{L_0^2 }{L(t)^2} \frac{\partial^2 w}{\partial \xi^2}+\left( \frac{\xi^2\ddot{L}(t)L(t)}{4DL_0^2} + \frac{\xi\ddot{A}(t)L(t)}{2DL_0} \right)w  \qquad \textrm{in } 0< \xi< L_0
\end{equation}
Next, change the time variable from $t$ to $s(t)= \int\limits_0^ t \frac{L_0^2}{L(\zeta)^2}d\zeta$, and write $v(\xi,s)=w(\xi, t)$. Then
\begin{equation}\label{eq_v}
\frac{\partial v}{\partial s} = D \frac{\partial^2 v}{\partial \xi^2}+\left( \frac{\xi^2\ddot{L}(t(s))L(t(s))^3}{4DL_0^4} + \frac{\xi\ddot{A}(t(s))L(t(s))^3}{2DL_0^3} \right)v  \qquad \textrm{in } 0< \xi< L_0
\end{equation}
\begin{equation} \label{eq_vBC}
v=0 \qquad \textrm\qquad\textrm{at } \xi=0 \textrm{ and } \xi=L_0
\end{equation}
Notice that the $v(\xi,s)$ equation is separable if and only if $\ddot{L}L^3=\gamma_{0}=$constant and $\ddot{A}L^3=\gamma_{1}=$constant.
This corresponds to $L(t)$ given by equation \eqref{eq_L}, with $\gamma_{0}=aL_0^2-b^2$, and $A(t)$ satisfying
equation \eqref{eq_ddotA} (which can be integrated twice to give $A(t)$). 
The $v(\xi,s)$ equation is then separable, with solutions of the form
$v(\xi,s)=\exp(\sigma s)g(\xi)$
where $g(\xi)$ satisfies the related Sturm-Liouville problem:
\begin{equation}\label{eq_SL}
\sigma g(\xi) = D g''(\xi)+\left(\frac{\gamma_{0}\xi^2}{4DL_0^4} + \frac{\gamma_{1}\xi}{2DL_0^3}\right)g(\xi)  \qquad \textrm{in } 0< \xi< L_0
\end{equation}
\begin{equation}\label{eq_SL_BC}
g=0 \qquad \textrm\qquad\textrm{at } \xi=0 \textrm{ and } \xi=L_0
\end{equation}
The Sturm-Liouville theory gives that there is a countably infinite set of eigenfunctions $g_{n}$ with eigenvalues $\sigma_{n}$, and that $v(\xi,s)$ has an eigenfunction expansion in terms of $v_n(\xi,s):=\exp(\sigma_n s)g_n(\xi)$, with coefficients depending only on the initial conditions. Thus, the solution for $u(\xi,t)$ is given exactly by a sum of
\begin{equation}
u_n(\xi,t)=\exp(\sigma_n s(t))g_n(\xi)\left(\frac{L_0}{L(t)}\right)^{1/2}\exp{\left(f_0t -\int\limits_0^ t \frac{\dot{A}(\zeta)^2 }{4D} d\zeta-\frac{\xi^2 \dot{L}(t)L(t)}{4DL_0^2}-\frac{\xi\dot{A}(t)L(t)}{2DL_0}\right)}
\end{equation}
where $s(t)= \int\limits_0^ t \frac{L_0^2}{L(\zeta)^2}d\zeta$. Thus equation \eqref{eq_un_theorem} is proved.

The required integrals (for $A(t)$, $s(t)$) depend on the specific form of $L(t)$: namely whether $a$ is zero or non-zero, and on the sign of $aL_0^2-b^2$. They can each be done by standard calculus, resulting in the expressions for $u_n$ in equations \eqref{eq_un_L0}, \eqref{eq_un_alphac}, \eqref{eq_un_rho}, \eqref{eq_un_g0negpos}, \eqref{eq_Th_g0neg}, and \eqref{eq_Th_g0pos}.
\end{proof}
In the following sections, the long-time behaviour of each solution is extracted based on the leading order terms. In certain cases the governing term depends on the eigenvalue $\sigma_n$.
Recall that, by Sturm-Liouville theory, the eigenvalues satisfy $\sigma_{n+1} \leq \sigma_{n}$, and the largest eigenvalue, $\sigma_1$, corresponds to an eigenfunction which is positive.
We know already that when $\gamma_0=\gamma_1=0$ then $\sigma_1=-\frac{D\pi^2}{L_0^2}$. In cases when $\gamma_0<0$, we need the following lemma when inferring the asymptotic behaviour.
\begin{lemma}
If $\gamma_{0}=-\rho^2<0$, then 
\begin{equation}\label{eq_sigmabound2}
\sigma_1 < -\frac{\vert\rho\vert}{2L_0^2} +\frac{\gamma_1^2}{4D\rho^2L_0^2}
\end{equation}
\end{lemma}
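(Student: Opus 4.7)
The plan is to recast the Sturm--Liouville problem \eqref{eq_SL}--\eqref{eq_SL_BC} as an inverted harmonic oscillator after completing the square, and then bound $\sigma_1$ by comparison with the ground-state energy of the corresponding genuine harmonic oscillator on the whole real line. Writing $V(\xi)=-\frac{\rho^2\xi^2}{4DL_0^4}+\frac{\gamma_1\xi}{2DL_0^3}$ for the coefficient of $g$ in \eqref{eq_SL}, the Rayleigh characterisation of the top eigenvalue of the self-adjoint operator $-D\,\partial_\xi^2 - V$ with Dirichlet data on $(0,L_0)$ reads
\begin{equation*}
-\sigma_1 \;=\; \inf_{g\in H_0^1(0,L_0),\ \|g\|_{L^2}=1}\int_0^{L_0}\!\bigl[D(g'(\xi))^2 - V(\xi)\,g(\xi)^2\bigr]\,d\xi,
\end{equation*}
and this infimum is attained at the positive principal eigenfunction $g_1$.

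Next I would complete the square, obtaining $-V(\xi) = \frac{\rho^2}{4DL_0^4}(\xi-\xi_0)^2 - \frac{\gamma_1^2}{4D\rho^2L_0^2}$ with $\xi_0=\gamma_1 L_0/\rho^2$, so that the integrand becomes exactly the quadratic form of the shifted harmonic oscillator $H_0 := -D\,\partial_\xi^2+\frac{\rho^2}{4DL_0^4}(\xi-\xi_0)^2$ on $\mathbb{R}$, up to the additive constant $-\gamma_1^2/(4D\rho^2L_0^2)$. Any admissible $g$ extends by zero to $\tilde g\in H^1(\mathbb{R})$ with the same $L^2$-norm and the same value of the integral, since the integrand vanishes wherever $g$ does. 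A direct check using the Gaussian trial $e^{-|\rho|(\xi-\xi_0)^2/(4DL_0^2)}$ shows that the ground-state energy of $H_0$ on $L^2(\mathbb{R})$ equals $|\rho|/(2L_0^2)$. Passing to the infimum thus yields the non-strict bound
\begin{equation*}
-\sigma_1 \;\ge\; \frac{|\rho|}{2L_0^2} - \frac{\gamma_1^2}{4D\rho^2L_0^2}.
\end{equation*}

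The delicate step, and the one I expect to carry the main weight of the argument, is upgrading this to a \emph{strict} inequality. The lowest eigenvalue of $H_0$ on $L^2(\mathbb{R})$ is simple, and the unique (up to scaling) minimiser of its Rayleigh quotient is the Gaussian above, which is strictly positive everywhere on $\mathbb{R}$. The zero-extension $\tilde g_1$ of the principal eigenfunction vanishes on $\mathbb{R}\setminus[0,L_0]$, so it cannot be a scalar multiple of that Gaussian. Consequently the Rayleigh quotient of $H_0$ evaluated at $\tilde g_1$ is \emph{strictly} larger than $|\rho|/(2L_0^2)$; since $g_1$ attains the infimum on the left, this strict gap transfers directly to $-\sigma_1$, giving \eqref{eq_sigmabound2}.
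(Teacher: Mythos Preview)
Your argument is correct, but it takes a genuinely different route from the paper's. The paper factors out the whole-line harmonic-oscillator ground state explicitly: writing $g_1(\xi)=e^{-\frac{1}{2}(\eta-\eta_0)^2}h(\eta)$ with $\eta=\sqrt{|\rho|/(2D)}\,\xi/L_0$ converts \eqref{eq_SL} into the weighted self-adjoint form $(h'e^{-(\eta-\eta_0)^2})'=\lambda\,h\,e^{-(\eta-\eta_0)^2}$, where $\lambda<0$ is equivalent to \eqref{eq_sigmabound2}. Integrating once over the interval, the left side reduces to boundary flux terms whose sign is forced by the Hopf-type observation that $h>0$ inside and $h=0$ at the endpoints, giving $\lambda<0$ immediately. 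Your approach is instead a domain-monotonicity argument: you extend $g_1$ by zero and compare the Dirichlet Rayleigh quotient with that of the full-line oscillator, then use simplicity of the Gaussian ground state to force strictness. The paper's proof is shorter and entirely self-contained (no appeal to the spectrum of the whole-line operator or to uniqueness of its minimiser), with strictness coming for free from the boundary terms; your version is more conceptual and makes transparent \emph{why} the bound is exactly the whole-line ground-state energy shifted by the constant from completing the square.
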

\begin{proof}
Write
$\ds{g_1(\xi)=e^{-\frac{1}{2}(\eta-\eta_0)^2}h(\eta)}$, where
$\ds{\eta=\sqrt{\frac{\vert\rho\vert}{2D}}\frac{\xi}{L_0}}$ and
$\ds{\eta_0=\frac{\gamma_{1}}{\vert\rho\vert^{3/2}\sqrt{2D}}}$.
This puts the equation into self-adjoint form
\begin{equation}\label{eq_h}
\frac{d}{d\eta}\left(h'(\eta)e^{-\left(\eta-\eta_0 \right)^2}\right) = \lambda h(\eta) e^{-\left(\eta-\eta_0\right)^2} \qquad \textrm{in } 0< \eta< \sqrt{\frac{\vert\rho\vert}{2D}}
\end{equation}
\begin{equation}
h=0 \qquad \textrm {at } \eta=0 \textrm{ and }  \eta= \sqrt{\frac{\vert\rho\vert}{2D}}
\end{equation}
where 
\begin{equation}
\lambda=1+\frac{2L_0^2}{\vert\rho\vert}\sigma_1 - \frac{\gamma_{1}^2}{2D\vert\rho\vert^3}
\end{equation}
Integrate equation \eqref{eq_h} over the interval, and recall that $h$ is positive, to deduce $\lambda<0$. This is equivalent to equation \eqref{eq_sigmabound2}.
\end{proof}
The following sections give the full expressions for $u_n(\xi,t)$, as well as their long-time behaviour. An exact formula is obtained for each form of $L(t)$ that is possible (depending on $a$ and $aL_0^2-b^2$). The integrals (for $A(t)$, $s(t)$) and the expressions occurring in equation \eqref{eq_un_theorem} differ between the cases. Thus, the formulae given below are the result of performing the necessary calculations and integrals, and substituting the relevant expressions into equation \eqref{eq_un_theorem}.

\subsection{$L(t)=L_0$}\label{section_L0}
For a fixed domain length $L_0$, we have $\ddot{L}L^3=\gamma_0=0$ and the separable cases are those where
\begin{equation}
A(t)=\frac{\gamma_{1}}{2L_0^3} t^2+c t+d \qquad \textrm{for some } c, d
\end{equation}
The separable solutions then have the form
\begin{equation}\label{eq_un_L0}
u_n(\xi,t)=\exp(\sigma_n t)g_n(\xi)\exp\left(f_0t-\frac{1}{4D}\left(\frac{\gamma_{1}^2}{3L_0^6} t^3+\frac{c\gamma_{1}}{L_0^3} t^2+c^2 t\right)-\frac{\xi}{2DL_0}\left(\frac{\gamma_{1}}{L_0^2} t+cL_0\right)\right)
\end{equation}
If $\gamma_{1}\neq0$ then, as $ t\rightarrow\infty$, $u(\xi,t)\rightarrow 0$ since the behaviour is dominated by the term
\begin{equation}
\exp\left(-\frac{\gamma_{1}^2}{12DL_0^6} t^3\right)
\end{equation}
If $\gamma_{1}=0$ then, as $ t\rightarrow\infty$, there is exponential growth or decay in the cases $f_0>\frac{D\pi^2}{L_0^2}+\frac{c^2}{4D}$ or $f_0<\frac{D\pi^2}{L_0^2}+\frac{c^2}{4D}$ respectively.
Indeed, the long time behaviour of $u_n$ is governed by
\begin{equation}
\exp\left(\sigma_n t+f_0 t-\frac{1}{4D}c^2 t  \right)
\end{equation}
where $\sigma_n =-\frac{Dn^2\pi^2}{L_0^2}$.

\subsection{$L(t)=L_0+\alpha t$ with $\alpha\neq0$}
When $L(t)=L_0+\alpha t$ then again $\ddot{L}L^3=\gamma_0=0$, but now the separable cases are those where
\begin{equation}
A(t)=\frac{\gamma_{1}}{2\alpha^2(L_0+\alpha t)}+c t+d  \qquad \textrm{for some } c, d
\end{equation}
The separable solutions then have the form
\begin{align} \label{eq_un_alphac}
u_n(\xi, t)=&\exp\left(\frac{\sigma_n L_0 t}{L_0+\alpha t}\right)g_n(\xi)\left(\frac{L_0}{L_0+\alpha t}\right)^{1/2}\exp(f_0t)\nonumber \\
&\times \exp\left(-\frac{1}{4D}\left(c^2 t-\frac{c\gamma_{1} t}{\alpha L_0(L_0+\alpha t)}-\frac{\gamma_{1}^2}{12\alpha^3}\left(\frac{1}{(L_0+\alpha t)^3} -\frac{1}{L_0^3}  \right)\right)\right)\nonumber\\
&\times \exp\left(-\frac{\xi^2\alpha(L_0+\alpha t)}{4DL_0^2}-\frac{\xi c(L_0+\alpha t)}{2DL_0}+\frac{\xi\gamma_{1}}{4DL_0\alpha(L_0+\alpha t)}\right)
\end{align}
If $\alpha>0$ then as $ t\rightarrow\infty$, the behaviour is asymptotically governed by
\begin{equation}
\exp\left( f_0t-\frac{1}{4D}\left(c+\frac{\xi\alpha}{L_0}\right)^2 t   \right)
\end{equation}
Recall that we defined
\begin{equation}
c_* = 2\sqrt{Df_0}
\end{equation}
Thus, if $-\alpha -c_* < c < c_*$, then there is a region of $\xi$ in which there is exponential growth: namely, where
\begin{equation}
\max\left(0,\frac{L_0}{\alpha}(-c_*-c)\right)<\xi<\min\left(L_0, \frac{L_0}{\alpha}(c_*-c)\right)
\end{equation}
Otherwise $u(\xi,t)$ decays to zero everywhere in $(0,L_0)$.

If instead $\alpha<0$, then $L(t)\rightarrow 0$ as $t\rightarrow -L_0/\alpha$, and in this limit, $u(\xi,t)\rightarrow 0$. Indeed, if $\gamma_{1}\neq0$, then the behaviour is governed by
\begin{equation}
\exp\left(\frac{\gamma_{1}^2}{48 D\alpha^3(L_0+\alpha t)^3}\right)
\end{equation}
which decays exponentially since $\alpha<0$. If $\gamma_{1}=0$ then the governing term is
\begin{equation}
\exp\left(\frac{\sigma_n L_0 t}{L_0+\alpha t}\right)
\end{equation}
where $\sigma_n =-\frac{Dn^2\pi^2}{L_0^2}<0$ and so again, $u(\xi,t)\rightarrow 0$.

\subsection{$L(t)=\sqrt{L_0^2+2\rho t}$ with $\rho\neq 0$}\label{section_rho}
If $L(t)^2=L_0^2+2\rho t$ then $\ddot{L}L^3=\gamma_{0}=-\rho^2<0$ and the separable cases are those where
\begin{equation}
A(t)=\frac{-\gamma_{1}\sqrt{L_0^2+2\rho t}}{\rho^2}+c t+d  \qquad \textrm{for some } c, d
\end{equation}
The separable solutions then have the form
\begin{align}\label{eq_un_rho}
u_n(\xi, t)=&\left(\frac{L_0^2+2\rho t}{L_0^2}\right)^{\ds{\frac{\sigma_n L_0^2}{2\rho}-\frac{1}{4}-\frac{\gamma_{1}^2}{8\rho^3 D}}}g_n(\xi)\exp\left(f_0 t-\frac{c^2}{4D} t+\frac{c\gamma_{1}}{2\rho^2D}\left(\sqrt{L_0^2+2\rho t}-L_0\right)\right) \nonumber \\
&\times  \exp\left(-\frac{\xi^2\rho}{4DL_0^2}+\frac{\xi\gamma_{1}}{2DL_0\rho}-\frac{\xi c\sqrt{L_0^2+2\rho t}}{2DL_0}\right)
\end{align}
Therefore if $\rho>0$ then, as $t\rightarrow\infty$,
there is exponential growth or decay
in the cases $f_0>\frac{c^2}{4D}$ or $f_0<\frac{c^2}{4D}$ respectively.
Indeed, the long time behaviour in this case is governed by
\begin{equation}
\exp\left(f_0t-\frac{c^2}{4D}t\right)
\end{equation}
If instead $\rho<0$ then $L(t)\rightarrow0$ as $t\rightarrow -L_0^2/2\rho$, and in this limit $u(\xi,t)\rightarrow 0$. This follows because the behaviour of each $u_n$ is governed by
\begin{equation}
\left(\frac{L_0^2+2\rho t}{L_0^2}\right)^{\ds{\frac{\sigma_n L_0^2}{2\rho}-\frac{1}{4}-\frac{\gamma_{1}^2}{8\rho^3 D}}}
\end{equation}
This implies that $u(\xi,t)\rightarrow 0$, by using the bound in equation \eqref{eq_sigmabound2}.

\subsection{$L(t)=\sqrt{a t^2 + 2b t + L_0^2}$ with $a\neq 0$ and $aL_0^2-b^2 \neq 0$}\label{section_g0negpos}
If $L(t)^2=a t^2 + 2b t + L_0^2$ then $\ddot{L}L^3=\gamma_{0}=aL_0^2-b^2$, and for $aL_0^2-b^2 \neq 0$ the separable cases are those where
\begin{equation}
A(t)=\frac{-\gamma_{1}}{b^2-aL_0^2}\sqrt{a t^2+2b t + L_0^2} + ct +d  \qquad \textrm{for some } c, d
\end{equation}
The separable solutions have the form
\begin{align}\label{eq_un_g0negpos}
u_n(\xi, t)=&\Theta_n(t)g_n(\xi)\left(\frac{L_0^2}{a t^2+2b t+L_0^2}\right)^{1/4} \nonumber \\
&\times   \exp\left(f_0 t-\frac{1}{4D}\left(\frac{\gamma_{1}^2a}{(b^2-aL_0^2)^2}+c^2\right) t + \frac{c\gamma_{1}}{2D(b^2-aL_0^2)}\left(\sqrt{a t^2+2b t+L_0^2}-L_0\right)\right) \nonumber \\
&\times   \exp\left(-\frac{\xi^2(a t+b) }{4DL_0^2}+ \frac{\xi\gamma_{1}(a t+b)}{2DL_0(b^2-aL_0^2)} -\frac{\xi c}{2DL_0}\sqrt{a t^2+2b t+L_0^2}\right)
\end{align}
where if $\gamma_0=aL_0^2-b^2 < 0$,
\begin{equation}\label{eq_Th_g0neg}
\Theta_n(t)=\left(\frac{\left(a t+b-\sqrt{b^2-aL_0^2}\right)\left(b+\sqrt{b^2-aL_0^2}\right)}{\left(b-\sqrt{b^2-aL_0^2}\right)\left(a t+b+\sqrt{b^2-aL_0^2}\right)}\right)^{\ds{\frac{\sigma_n L_0^2}{2\sqrt{b^2-aL_0^2}} - \frac{\gamma_{1}^2}{8D(b^2-aL_0^2)^{3/2}}}}
\end{equation}
and if $\gamma_0=aL_0^2-b^2 >0$ then
\begin{equation}\label{eq_Th_g0pos}
\Theta_n(t)=e^{
\left(\frac{\sigma_n L_0^2}{\sqrt{aL_0^2-b^2}} +\frac{\gamma_{1}^2}{4D(aL_0^2-b^2)^{3/2}} \right)
\left(\arctan\left(\frac{a t+b}{\sqrt{aL_0^2-b^2}}\right)-\arctan\left(\frac{b}{\sqrt{aL_0^2-b^2}}\right)\right)}
\end{equation}
If $L(t)$ remains positive for all $t>0$, then $a>0$ and the behaviour as $t\rightarrow\infty$ is governed by
\begin{equation}
\exp\left(f_0t-\frac{1}{4D}\left(c-\frac{\gamma_{1}\sqrt{a}}{(b^2-aL_0^2)}+\frac{\xi\sqrt{a}}{L_0}\right)^2 t\right)
\end{equation}
So, if $\ds{-c_*+\frac{\gamma_{1}\sqrt{a}}{(b^2-aL_0^2)}-\sqrt{a}<c<c_*+\frac{\gamma_{1}\sqrt{a}}{(b^2-aL_0^2)}}$, then there is a region of $\xi$ in which there is exponential growth: namely, where
\begin{equation}\label{eq_abL0}
\max\left(0,\frac{L_0}{\sqrt{a}}\left(-c_*-c+\frac{\gamma_{1}\sqrt{a}}{(b^2-aL_0^2)}\right)\right)<\xi<\min\left(L_0,\frac{L_0}{\sqrt{a}}\left(c_*-c+\frac{\gamma_{1}\sqrt{a}}{(b^2-aL_0^2)}\right)\right)
\end{equation}
Otherwise, there is exponential decay everywhere.

If instead $L(t)\rightarrow0$ in a finite time, then it must be that $aL_0^2-b^2 < 0$, and that $L(t)\rightarrow0$ as
\begin{equation}
t\rightarrow -\frac{1}{a}\sqrt{b^2-aL_0^2}-\frac{b}{a}
\end{equation}
In this limit, $u(\xi,t)\rightarrow0$ since the behaviour is governed by
\begin{equation}
\left(\frac{b+\sqrt{b^2-aL_0^2}}{a t+b+\sqrt{b^2-aL_0^2}}\right)^{\ds{\frac{\sigma_n L_0^2}{2\sqrt{b^2-aL_0^2}} - \frac{\gamma_{1}^2}{8D(b^2-aL_0^2)^{3/2}}+\frac{1}{4}}}
\end{equation}
This implies that $u(\xi,t)\rightarrow0$, by
using the bound in equation \eqref{eq_sigmabound2} with $\rho^2=b^2-aL_0^2=-\gamma_{0}$.

\subsection{Observed properties of the solutions}\label{section_properties}
These expressions are very instructive in understanding the
effects of a time-dependent domain on the solution. From them, one can observe the ways in which the exact nature of the time-dependence influences the solution, in both short and long time.
Although the formulae in equations \eqref{eq_un_L0}, \eqref{eq_un_alphac}, \eqref{eq_un_rho}, \eqref{eq_un_g0negpos}, \eqref{eq_Th_g0neg}, and \eqref{eq_Th_g0pos} differ, we note some common behaviour of these exact solutions in the asymptotic large time (or finite time) limit.

Firstly, whenever the domain length tends to zero in a finite time, the solution also tends to zero uniformly in $\xi$ (see sections \ref{section_rho} and \ref{section_g0negpos}). In each case this follows from an upper bound on the eigenvalue $\sigma_1$.

Note, also, that in the separable cases with $L(t)\rightarrow\infty$ as $t\rightarrow\infty$, the long-time behaviour does not depend on the eigenvalue $\sigma_1$. In these cases, $s(t)=o(t)$ and the term $\exp(\sigma_1 s(t))$ is not of leading order.

Next, note that the separable solutions share the property that there is exponential growth at any $\xi\in(0,L_0)$ such that
\begin{equation}\label{eq_c*growth}
-c_*< \lim_{t\rightarrow\infty}\frac{x(\xi,t)}{t} <c_*
\end{equation}
whereas there is exponential decay if
\begin{equation}\label{eq_c*decay}
\left\vert  \lim_{t\rightarrow\infty} \frac{x(\xi,t)}{t} \right\vert > c_*
\end{equation}
(Here $x(\xi,t)=\frac{\xi}{L_0}L(t)+A(t)$ is the original variable.)
This is, in some sense, similar to the behaviour of the solution, $\tilde{\psi}$ say, on the whole real line with initial conditions compactly supported in $[a,b]$:
\begin{equation}
\tilde{\psi}(x,t)= \frac{1}{\sqrt{4\pi Dt}}\int_a^b \tilde{\psi}(y,0) \exp\left(f_0t -\frac{(x-y)^2}{4Dt} \right)dy
\end{equation}
This spreads at the asymptotic speed $c_*$, in the sense that for $\vert c\vert <c_*$, $\tilde{\psi}(ct,t)\rightarrow\infty$ whereas $\sup_{\vert x\vert \geq c_*t}\tilde{\psi}(x,t)\rightarrow 0$ as $t\rightarrow\infty$.
It is well-known that $c_*$ is also the minimum wave speed for travelling wave solutions to the nonlinear FKPP equation, and that it is the asymptotic spreading speed for solutions to the same equation on the real line with compactly supported initial conditions (see \cite{KPP}, \cite{AroWei_1}).
Much work has been done on determining the exact behaviour associated with this spreading, and especially with respect to Bramson's logarithmic correction term (see \cite{Bra_1}, \cite{Bra_2}, \cite{HamNolRoqRyz_2}).
It is natural, therefore, to be interested in the exact behaviour of our solution $\psi(x,t)$ (in terms of the original variable $x$), at this critical interface between growth and decay. This is considered in section \ref{section_crit}.

Finally, note that in section \ref{section_L0} and the particular sub-case where $\gamma_1=0$, the problem has become that of an interval of fixed length $L_0$ moving at a constant speed $c$. Our result is in agreement with \cite{PotLew} and \cite{BerDieNagZeg} in deriving a critical domain length, which is defined by the equation $f_0=\frac{D\pi^2}{L_0^2}+\frac{c^2}{4D}$, and represents a threshold between decay and growth.

\subsection{Applications to more general $A(t)$, $L(t)$} \label{section_general_AL}
The preceding results are relevant not only to those specific forms of $A(t)$, $L(t)$ which led to the exact solutions.
The explicit expressions can also be used to deduce bounds on the solution for other, more general, forms of $A(t)$ and $L(t)$.

The parabolic comparison principle leads to the following result.
\begin{proposition}
Let $\psi_1(x,t)$ and $\psi_2(x,t)$ be the solutions with $A_1(t)$, $L_1(t)$, and $A_2(t)$, $L_2(t)$ respectively. If (for each $t$) $(A_1(t), A_1(t)+L_1(t))\subseteq (A_2(t), A_2(t)+L_2(t))$, then $\psi_1(x,t) \leq\psi_2(x,t)$ for $x \in (A_1(t), A_1(t)+L_1(t))$.
\end{proposition}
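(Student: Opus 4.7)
The plan is to apply a parabolic comparison principle to the difference $w(x,t)=\psi_2(x,t)-\psi_1(x,t)$ on the smaller space-time cylinder $Q_1=\{(x,t):A_1(t)<x<A_1(t)+L_1(t),\ 0<t\leq T\}$, and implicitly I will assume that the initial data are ordered, i.e.\ $\psi_1(x,0)\leq\psi_2(x,0)$ for $x\in(A_1(0),A_1(0)+L_1(0))$, which is the natural hypothesis making the statement non-vacuous (and is in particular satisfied when the two problems share the same nonnegative initial datum restricted appropriately).

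First I would observe that $w$ satisfies the same linear equation $w_t=Dw_{xx}+f_0 w$ in $Q_1$, since equation \eqref{eq_psi} is linear and the coefficients $D$ and $f_0$ are constants independent of the domain. Next I would check the sign of $w$ on the parabolic boundary of $Q_1$: on the lateral parts $x=A_1(t)$ and $x=A_1(t)+L_1(t)$ one has $\psi_1=0$ by \eqref{eq_psi_BC}, while $\psi_2\geq 0$ there because under the inclusion $(A_1(t),A_1(t)+L_1(t))\subseteq(A_2(t),A_2(t)+L_2(t))$ these lateral points lie in the closed domain of $\psi_2$, where $\psi_2\geq 0$ by hypothesis (either interior nonnegativity, or the Dirichlet condition $\psi_2=0$ if the boundaries happen to coincide). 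At the initial time, $w(x,0)\geq 0$ by assumption. Hence $w\geq 0$ on the parabolic boundary of $Q_1$.

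To conclude I would remove the zero-order term by setting $v(x,t)=e^{-f_0 t}w(x,t)$, so that $v$ satisfies the plain heat equation $v_t=Dv_{xx}$ in $Q_1$, with $v\geq 0$ on the parabolic boundary. The weak maximum (minimum) principle for the heat equation on the cylinder $Q_1$ — whose lateral sides are graphs of the $C^2$ functions $A_1$ and $A_1+L_1$, so that the domain is regular enough for the classical statement — then yields $v\geq 0$, and therefore $w\geq 0$, throughout $Q_1$. This gives $\psi_1(x,t)\leq\psi_2(x,t)$ for $(x,t)\in Q_1$, as required.

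The only genuine subtlety is that $Q_1$ is a non-cylindrical domain with moving lateral boundaries, so one must invoke the maximum principle in a form valid for such cylinders; this is standard once $A_1$ and $L_1$ are continuous (here they are $C^2$), and can be proved either by an exhaustion argument with straightened domains or by the usual contradiction argument applied at an interior minimum of $v-\varepsilon t$ for arbitrary $\varepsilon>0$. No new calculation is needed, and I expect no other obstacle.
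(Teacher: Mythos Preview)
Your argument is correct and is precisely what the paper has in mind: the paper gives no explicit proof of this proposition, merely stating that it ``leads'' from the parabolic comparison principle, and your proposal spells out the standard details of that application (difference solves the same linear equation, nonnegativity on the parabolic boundary of the smaller domain, removal of the zero-order term, maximum principle on a non-cylindrical domain). Your observation that one must assume the initial data are ordered is a genuine omission in the paper's statement as well.
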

This therefore provides an explicit lower [or upper] bound for the solution $\psi$, whenever the domain contains [or is contained by] one of the separable cases.

A rather different extension of the method is to consider cases for which $\ddot{L}L^3$ and $\ddot{A}L^3$ are each bounded above, or bounded below. In this case we can bound the solution by expressions involving $A(t)$ and $L(t)$, together with the same Sturm-Liouville eigenfunctions and eigenvalues that occurred in the preceding sections.
\begin{proposition}
Suppose that
\begin{equation}\label{eq_bounds}
\ddot{L}(t)L(t)^3\leq \gamma_{0}^{+}\qquad \textrm{and}\qquad
\ddot{A}(t)L(t)^3 \leq \gamma_{1}^{+}
\end{equation}
for some constants $\gamma_{0}^{+}$, $\gamma_{1}^{+}$.
Then, for any given initial conditions $u(\xi,0)$ in $L^2([0,L_0])$, the solution $u(\xi,t)$ can be bounded above by a sum of the $u_n(\xi,t)$ in equation \eqref{eq_un_theorem}, where now the
$g_n(\xi)$, $\sigma_n$ satisfy the Sturm-Liouville problem in equations \eqref{eq_SL}, \eqref{eq_SL_BC} with $\gamma_{0}=\gamma_{0}^{+}$ and $\gamma_{1}=\gamma_{1}^{+}$.
If equation \eqref{eq_bounds} holds with both inequalities reversed, then the solution $u(\xi,t)$ can instead be bounded below by a sum of the $u_n(\xi,t)$.
\end{proposition}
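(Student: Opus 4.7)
My plan is to follow the same sequence of transformations used in the proof of Theorem 2.1 and, in place of separation of variables, apply a parabolic comparison argument on the reduced equation. I would perform the change of variables $u\mapsto w$ via \eqref{eq_w} and then $w(\xi,t)=v(\xi,s(t))$ with $s(t)=\int_0^t L_0^2/L(\zeta)^2\,d\zeta$ to arrive at equation \eqref{eq_v}. Writing the time-dependent zero-order coefficient as
$$
Q(\xi,s):=\frac{\xi^2\,\ddot L(t(s))L(t(s))^3}{4DL_0^4}+\frac{\xi\,\ddot A(t(s))L(t(s))^3}{2DL_0^3},
$$
and using $\xi,\xi^2\geq 0$ on $[0,L_0]$, the hypothesis \eqref{eq_bounds} immediately gives $Q(\xi,s)\leq Q^+(\xi)$ uniformly in $s\geq 0$, where
$$
Q^+(\xi):=\frac{\gamma_0^+\xi^2}{4DL_0^4}+\frac{\gamma_1^+\xi}{2DL_0^3}.
$$

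Next I would introduce the autonomous comparison problem
$$
\frac{\partial V}{\partial s}=D\frac{\partial^2 V}{\partial\xi^2}+Q^+(\xi)V,\qquad V(0,s)=V(L_0,s)=0,\qquad V(\xi,0)=v(\xi,0).
$$
Because $Q^+$ is independent of $s$, this is precisely the separable problem of Theorem 2.1 with $\gamma_0=\gamma_0^+$ and $\gamma_1=\gamma_1^+$, so Sturm-Liouville theory gives $V(\xi,s)=\sum_n c_n e^{\sigma_n s}g_n(\xi)$, where $(g_n,\sigma_n)$ solve \eqref{eq_SL}--\eqref{eq_SL_BC} with the $\gamma^+$ parameters and $c_n$ are the coefficients of $v(\xi,0)$ in the orthonormal basis $\{g_n\}$.

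The central step is to compare $v$ and $V$. Setting $W:=V-v$, subtracting the two PDEs yields
$$
\frac{\partial W}{\partial s}-D\frac{\partial^2 W}{\partial\xi^2}-Q^+(\xi)W=\bigl(Q^+(\xi)-Q(\xi,s)\bigr)v,
$$
whose right-hand side is nonnegative because $Q^+\geq Q$ and because $v\geq 0$ (from $\psi\geq 0$ together with the strictly positive transformation factors in \eqref{eq_w}). Since also $W(\xi,0)\equiv 0$ and $W=0$ on the spatial boundary, the parabolic weak maximum principle, applicable because $Q^+$ is bounded on the compact interval, gives $W\geq 0$, i.e.\ $v\leq V$. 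Undoing the $u\mapsto v$ change of variables then produces $u(\xi,t)\leq\sum_n c_n u_n(\xi,t)$, where each $u_n$ has precisely the form \eqref{eq_un_theorem} built from the actual $A(t),L(t)$ but with $(g_n,\sigma_n)$ coming from the $\gamma^+$ Sturm-Liouville problem. The reversed-inequality case is symmetric: one defines $Q^-(\xi)\leq Q(\xi,s)$, and the same $W$-comparison with the source now nonpositive yields $V\leq v$, hence the corresponding lower bound.

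The step I anticipate needing the most care is the application of the parabolic maximum principle: one must confirm that $v\geq 0$ throughout the evolution --- which is immediate from $\psi\geq 0$ and from the strict positivity of the factor in \eqref{eq_w} --- and check that $Q^+$ is uniformly bounded on $[0,L_0]$ so that the standard maximum principle with bounded zero-order coefficient applies. Everything else is an algebraic repackaging of the computations already performed in the proof of Theorem 2.1.
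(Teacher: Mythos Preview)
Your proposal is correct and follows essentially the same approach as the paper: perform the transformations of Theorem~2.1 to reach equation~\eqref{eq_v}, replace the time-dependent coefficients $\ddot L L^3$ and $\ddot A L^3$ by the constants $\gamma_0^+,\gamma_1^+$ to obtain the separable comparison problem, and use positivity of $v$ together with the parabolic comparison principle to conclude $v\leq V$ (and then undo the transformation). Your explicit formulation via $W=V-v$ with a nonnegative source term is just a slightly more detailed way of phrasing the paper's assertion that $v^+$ is a supersolution for $v$.
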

\begin{proof}
The same changes of variables as in Theorem \ref{theorem_exactsolutions} leads to equations \eqref{eq_v}, \eqref{eq_vBC} for $v(\xi,s)$. Let $v^{+}(\xi,s)$ satisfy these same equations but with $\gamma_{0}^{+}$ in place of $\ddot{L}L^3$, and $\gamma_{1}^{+}$ in place of $\ddot{A}L^3$. (This is the separable problem which has just been considered.) Now, due to the special form equation \eqref{eq_v} and the positivity of the solutions, $v^{+}$ is a supersolution for $v$. The comparison principle can be applied to $v$, to
deduce that if $v(\xi,0)\leq v^{+}(\xi,0)$, then $v(\xi,s)\leq v^{+}(\xi,s)$ for all $s$. On changing variables back, we obtain the stated upper bound on $u(\xi,t)$.

If the inequalities in equation \eqref{eq_bounds} are reversed, and if $v^{+}(\xi,0)\leq v(\xi,0)$, then $v^{+}(\xi,s)$ is instead a subsolution, and thus we obtain the lower bound on $u(\xi,t)$.
\end{proof}

For the sake of completeness, we make the remark that, in the level of generality considered (i.e. $A(t)$ and $L(t)$ twice continuously differentiable), the domains --- and consequently the solutions --- will be hard to describe in any very general terms. Indeed, examples can be constructed with alternating growth and decay, such that the solution becomes both arbitrarily large and arbitrarily small over time. Such examples are not the focus of this paper.
 
\subsection{Exact solutions on a ball in higher dimension}
To conclude this section, we demonstrate that a similar process can lead to exact solutions to the problem on a ball in $\mathbb{R}^n$ with radius $R(t)$ and centre ${\bf A}(t)$. Consider the problem
\begin{equation}\label{eq_psiRn}
\frac{\partial \psi}{\partial t} = D \nabla^2 \psi+ f_0\psi  \qquad \textrm{in } \vert {\bf x}-{\bf A}(t)\vert <R(t)
\end{equation}
\begin{equation}\label{eq_psiRnBC}
\psi=0 \qquad \textrm{on } \vert {\bf x}-{\bf A}(t)\vert =R(t)
\end{equation}
A change of variables from ${\bf x}$ to ${\bf z}=\frac{({\bf x}-{\bf A}(t))}{R(t)}R_0$, with $R_0=R(0)$, and from $\psi({\bf x},t)$ to
\begin{equation}\label{eq_wRn}
w({\bf z}, t) = \psi({\bf x},t)\left(\frac{R(t)}{R_0}\right)^{\frac{n}{2}}\exp{\left(-f_0 t +\int\limits_0^t \frac{\vert{\bf \dot{A}}(\zeta)\vert^2 }{4D}d\zeta+\frac{ \dot{R}(t)R(t)}{4DR_0^2}\vert {\bf z}\vert^2+\frac{R(t)}{2DR_0}{\bf z \cdot \dot{A}}(t)\right)} 
\end{equation}
followed by $s(t)= \int\limits_0^t \frac{R_0^2}{R(\zeta)^2}d\zeta$, and $v({\bf z}, s)=w({\bf z}, t)$, leads to the equation
\begin{equation}
\frac{\partial v}{\partial s} = D \nabla^2 v +\left(\frac{\vert {\bf z}\vert ^2 \ddot{R}(t(s))R(t(s))^3}{4DR_0^4} + \frac{({\bf z}\cdot{\bf \ddot{A}}(t(s))) R(t(s))^3}{2DR_0^3} \right)  v \qquad \textrm{for }  \vert {\bf z} \vert < R_0
\end{equation}
\begin{equation}
v=0 \qquad\textrm{at } \vert {\bf z} \vert =R_0
\end{equation}
This is separable in $s$, $r=\vert {\bf z} \vert$, and $\theta$ (the angular co-ordinates) if $\ddot{R}R^3=\gamma_{0}=$constant and $\ddot{{\bf A}}R^3={\bf 0}$.
This corresponds to $R(t)^2=at^2+2bt+R_0^2$ for some constants $a$,$b$, and $\gamma_{0}=aR_0^2-b^2$;
and ${\bf A}(t)={\bf A_0}+{\bf c}t$ for some constant vectors ${\bf A_0}$ and ${\bf c}$. 
The solutions can then be expressed in terms of a sum of the eigenfunctions $v_l(r,\theta)=H_l(\theta)X_l(r)$ of 
\begin{equation}\label{eq_SL_n}
\sigma_l v_l(r,\theta) = D \nabla^2 v_l +\frac{r ^2 \gamma_{0}}{4DR_0^4}  v_l \qquad \textrm{on }  r < R_0
\end{equation}
\begin{equation}\label{eq_SL_BC_n}
v_l=0 \qquad\textrm{at } r =R_0
\end{equation}
which satisfy the correct periodicity in $\theta$ and which are non-singular at the origin $r=0$. This leads to the following theorem.
\begin{theorem}
Let $\psi({\bf x},t)$ satisfy equations \eqref{eq_psiRn}, \eqref{eq_psiRnBC} on the ball in $\mathbb{R}^n$ with radius $R(t)=\sqrt{at^2+2bt+R_0^2}$ and centre ${\bf A}(t)={\bf A_0}+{\bf c}t$. Then, for any suitable initial conditions $\psi({\bf x},0)$, the solution for $\psi({\bf x},t)$ can be obtained \emph{exactly}, as a sum of $\psi_l({\bf x},t)$ with coefficients depending only on the initial conditions. The $\psi_l$ are expressed purely in terms of ${\bf c}$, the constants occurring in $R(t)$, and the eigenfunctions and eigenvalues ($v_l$, $\sigma_l$) of the eigenvalue problem \eqref{eq_SL_n}, \eqref{eq_SL_BC_n} with $\gamma_{0}=aR_0^2-b^2$.
\end{theorem}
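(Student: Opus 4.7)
The plan is to formalise the construction that the preceding discussion has essentially already built, verifying that the chosen $R(t)$ and ${\bf A}(t)$ satisfy the separability conditions $\ddot{R}R^3 = \gamma_0 = $ const and $\ddot{{\bf A}}R^3 = {\bf 0}$, and then carrying the eigenfunction expansion back through the two changes of variables. First I would check the conditions directly: from $R(t)^2 = at^2+2bt+R_0^2$, differentiating gives $R\dot{R} = at+b$ and then $\ddot{R} = (aR^2-(at+b)^2)/R^3 = (aR_0^2-b^2)/R^3$, so that $\ddot{R}R^3 = aR_0^2 - b^2$ is indeed constant; and since ${\bf A}(t) = {\bf A_0}+{\bf c}t$ is linear in $t$, $\ddot{{\bf A}} = {\bf 0}$ automatically. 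This places us in exactly the regime in which the derivation preceding the theorem yields a separable $v$-equation.

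Next I would apply the transformation in equation \eqref{eq_wRn} followed by the time change $s(t) = \int_0^t R_0^2/R(\zeta)^2\,d\zeta$, reducing the problem to
\begin{equation}
\frac{\partial v}{\partial s} = D\nabla^2 v + \frac{\gamma_0 r^2}{4DR_0^4} v \qquad \text{in } r < R_0, \qquad v = 0 \text{ at } r = R_0,
\end{equation}
which is autonomous in $s$ because the coupling term has lost all dependence on $t$. I would then seek solutions of the form $v({\bf z},s) = e^{\sigma s}H(\theta)X(r)$ and separate in spherical coordinates: the angular part is the standard eigenvalue problem for the Laplace--Beltrami operator on $S^{n-1}$, yielding spherical harmonics, and the radial part gives a singular Sturm--Liouville problem on $(0,R_0)$ with the harmonic potential $\gamma_0 r^2/(4DR_0^4)$, subject to regularity at the origin and the Dirichlet condition at $R_0$. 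This produces a discrete family of eigenpairs $(v_l,\sigma_l)$ indexed so as to absorb both the radial and angular quantum numbers.

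Finally I would expand the initial data $v({\bf z},0)$ in this orthonormal basis (the expansion coefficients depending only on $\psi({\bf x},0)$ via the transformation \eqref{eq_wRn} at $t=0$), write $v({\bf z},s) = \sum_l c_l e^{\sigma_l s}v_l({\bf z})$, and invert the change of variables to recover $\psi_l({\bf x},t)$ in closed form in terms of ${\bf c}$, the constants in $R(t)$, and $(v_l,\sigma_l)$. The integrals $\int_0^t |\dot{{\bf A}}|^2/(4D)\,d\zeta = |{\bf c}|^2 t/(4D)$ and $s(t) = \int_0^t R_0^2/R(\zeta)^2\,d\zeta$ are elementary, with $s(t)$ evaluated by cases on the sign of $aR_0^2-b^2$ exactly as in sections \ref{section_L0}--\ref{section_g0negpos}.

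The main obstacle is not the algebra, which is dictated by the one-dimensional computation, but rather making rigorous the eigenfunction expansion for the singular radial Sturm--Liouville problem in $\mathbb{R}^n$ and justifying termwise application of the inverse transformation. I would address this by appealing to the standard theory for the Laplacian with a bounded (polynomial) potential on a ball with Dirichlet conditions: the associated operator is self-adjoint with compact resolvent in $L^2$ (with the appropriate radial weight $r^{n-1}$), which supplies a complete orthonormal basis and convergence of the expansion for any $v(\cdot,0) \in L^2$. Since \eqref{eq_wRn} and the time change are smooth bijections for $R(t)>0$, this completeness transfers back to the $\psi$-problem, yielding the claimed exact representation.
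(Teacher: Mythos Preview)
Your proposal is correct and follows essentially the same route as the paper: you verify the separability conditions $\ddot{R}R^3=aR_0^2-b^2$ and $\ddot{{\bf A}}={\bf 0}$, apply the change of variables \eqref{eq_wRn} and the time change $s(t)$, reduce to the autonomous eigenvalue problem \eqref{eq_SL_n}--\eqref{eq_SL_BC_n}, and invert. The only addition is your final paragraph on the self-adjointness and compact-resolvent justification for the eigenfunction expansion, which the paper leaves implicit.
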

The explicit expressions are similar to the one-dimensional case, but note the dependence on $n$ in the factor $\left(\frac{R(t)}{R_0}\right)^{\frac{n}{2}}$ in equation \eqref{eq_wRn} as well as, of course, the dependence on $n$ in the eigenfunctions and eigenvalues. In $n$ dimensions we have the bound
$\sigma_1 <-\frac{n\rho}{2R_0^2}$ on the principal eigenvalue when $\gamma_0=-\rho^2<0$.
 
\section{Critical boundary motion}\label{section_crit}
\subsection{Behaviour near the critical speed}
In this section we take up the question mentioned in section \ref{section_properties}, regarding
the exact behaviour of our solution $\psi(x,t)$ at the critical interface between growth and decay.
Recall that we defined
\begin{equation}
c_*=2\sqrt{Df_0}
\end{equation}
and that, in the separable solutions, there was exponential growth of $u(\xi,t)$ at any $\xi\in(0,L_0)$ such that equation \eqref{eq_c*growth} held,
and exponential decay at any $\xi$ where equation \eqref{eq_c*decay} held (i.e. $x(\xi,t)$ travelling slower or faster than $c_*$, respectively).
It is natural to seek a more precise description of this changeover between regions of growth and decay.
Given $A(t)$, $L(t)$, are we able to track the position $x(t)$ at which the solution $\psi(x,t)$ is equal to some a constant, $O(1)$, value? For which choices of $A(t)$, $L(t)$ will the solution be exactly of order $1$ near the boundary (neither growing to $\infty$ nor decaying to $0$)?

Initially, let us make use of an exact solution from section \ref{section_separable}. Let $\hat{\psi}(x,t)$ be the solution on the interval 
\begin{equation}
-c_*t - \frac{L_0}{2} < x <\frac{L_0}{2}+c_*t
\end{equation} 
This is given by equation \eqref{eq_un_alphac} with $c=-c_*$, $\alpha=2c_*$, $\gamma_1=0$, $\sigma_n=-\frac{Dn^2\pi^2}{L_0^2}$ and $g_n(\xi)=\sin\left(\frac{n\pi\xi}{L_0}\right)$. For simplicity, take the initial conditions to be $\sin\left(\frac{\pi\xi}{L_0}\right)$. Then (recalling that $f_0=\frac{c_*^2}{4D}$) this exact solution is:
\begin{align}\label{eq_uexact}
\hat{u}(\xi, t)=&\exp\left(-\frac{D\pi^2 t}{L_0( L_0+2c_* t) }\right)\sin\left(\frac{\pi\xi}{L_0}\right)\left(\frac{L_0}{L_0+2c_* t}\right)^{1/2}\exp\left(\frac{\xi c_*}{2DL_0}(L_0+2c_* t) \left(1-\frac{\xi}{L_0}\right)\right)
\end{align}
Therefore at $x=-c_*t - L_0/2 +y$, we have
\begin{align}
\hat{\psi}(-c_*t - L_0/2+y,t)=&\hat{u}\left(\frac{yL_0}{L_0+2c_*t},t\right)\nonumber\\
=&\exp\left(-\frac{D\pi^2 t}{L_0( L_0+2c_* t) }\right) \sin\left(\frac{\pi y}{L_0+2c_*t} \right) \left(\frac{L_0}{L_0+2c_*t}\right)^{1/2}\nonumber\\
&\times\exp\left(\frac{yc_*}{2D}\left(1-\frac{y}{L_0+2c_*t}\right)\right)
\end{align}
If $y=O(1)$ then as $t\rightarrow\infty$,
\begin{equation}
\hat{\psi}(-c_*t - L_0/2+y,t)=O\left( \frac{y}{t}\times \frac{1}{t^{1/2}}\times \exp\left(\frac{yc_*}{2D}\right)\right) = O(t^{-3/2})\rightarrow 0
\end{equation}
Observe that the choice $y(t)=\frac{3D}{c_*}\log(t+1)$ in equation \eqref{eq_uexact} removes all the powers of $t$, and gives (as $t\rightarrow\infty$): 
\begin{equation}
\hat{\psi}\left(-c_*t - \frac{L_0}{2}+\frac{3D}{c_*}\log(t+1),t\right)= O\left( \frac{\log t}{t}\times \frac{1}{t^{1/2}}\times (t+1)^{3/2} \right)=O( \log t) \rightarrow \infty
\end{equation}
The form of this exact solution for $\hat{\psi}$ suggests that the critical choices of $A(t)$, $L(t)$ (where the solution near to the boundary remains exactly of order $1$) may occur when the endpoints move as $\pm c_* t$ plus a logarithmic term (plus smaller order corrections). Furthermore, the fact that the choice $y(t)=\frac{3D}{c_*}\log(t+1)$ removes all the powers of $t$, suggests the likely coefficient of such a term.
The following section will give the precise statement of the behaviour on an interval which does include a logarithmic adjustment to the endpoints.

\subsection{Precise behaviour in the critical case}
From now on, we restrict attention to cases where $A(t)=-\frac{L(t)}{2}$. Our change of variables from equation \eqref{eq_w} becomes
\begin{equation}\label{eq_wu}
w(\xi, t)=u(\xi, t)\left(\frac{L(t)}{L_0}\right)^{1/2}\exp{\left(-f_0 t + \int\limits_0^ t \frac{\dot{L}(\zeta)^2 }{16D} d\zeta+\xi (\xi-L_0) \frac{\dot{L}(t)L(t)}{4DL_0^2}\right)}
\end{equation}
Let us give a precise definition of the behaviour we are interested in, and the notation we shall use for it.
\begin{definition}
Given two functions $F_1$, $F_2$, one will be referred to as being \emph{exactly of the order} the other (in a given limit), and denoted by $F_1=\overline{\underline{O}}(F_2)$, when $F_2=O(F_1)$ and $F_1=O(F_2)$ (in the limit under consideration). In other words, there are positive constants $0< \beta_0 \leq \beta_1$ such that $\beta_0  \vert F_2 \vert \leq \vert F_1 \vert \leq \beta_1 \vert F_2 \vert$.
\end{definition}
The following theorem is the main result which is proved in the remainder of this section. The proof relies on the construction of a supersolution and a subsolution, both having the specified behaviour.
\begin{theorem}\label{theorem_crit}
Let
\begin{equation}\label{eq_Acrit}
A(t)=\frac{-L(t)}{2}=-c_*t + \alpha\log(t+1) +\eta(t)
\end{equation}
where $c_*=2\sqrt{Df_0}$, $\alpha>0$, and
\begin{equation}\label{eq_eta}
\eta(t)=O(1), \qquad \dot{\eta}(t)=o(1/t), \qquad \ddot{\eta}(t)=o(1/t^2) \qquad \dddot{\eta}(t)=o(1/t^3)\qquad \textrm{as } t\rightarrow\infty
\end{equation}
Then
\begin{equation}
\psi(A(t)+y,t)=\overline{\underline{O}}\left(y t^{-\frac{3}{2}+\frac{\alpha c_*}{2D}} \right) \qquad \textrm{as }t\rightarrow\infty \textrm{, for }y=O(1)
\end{equation}
\end{theorem}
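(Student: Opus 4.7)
My plan is to apply the change of variables from equation \eqref{eq_wu} to reduce the theorem to a statement about $w(\xi, t)$ on the fixed interval $[0, L_0]$. With $A(t) = -L(t)/2$ this transformation eliminates the first-order terms and leaves
\[
\partial_t w = \frac{D L_0^2}{L(t)^2}\,\partial_\xi^2 w + \frac{\ddot{L}(t)\,L(t)}{4DL_0^2}\,\xi(\xi - L_0)\,w, \qquad w(0, t) = w(L_0, t) = 0.
\]
The hypotheses \eqref{eq_eta} give $L(t) = 2c_* t - 2\alpha\log(t+1) + O(1)$, $\dot{L}(t) = 2c_* + O(1/t)$, and $\ddot{L}(t) = 2\alpha/(t+1)^2 + o(1/t^2)$. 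Using $f_0 = c_*^2/(4D)$, the scalar prefactor in \eqref{eq_wu} becomes $(L/L_0)^{1/2}(t+1)^{-\alpha c_*/(2D)}$ times a bounded quantity, while the exponential in $\xi$, evaluated at $\xi = yL_0/L(t)$ with $y = O(1)$, is bounded above and below by positive constants. Combined with $\xi = \overline{\underline{O}}(y/t)$, this yields
\[
\psi(A(t)+y, t) = \overline{\underline{O}}\!\left(w(\xi, t)\, t^{\alpha c_*/(2D) - 1/2}\right),
\]
so the theorem reduces to showing $w(\xi, t) = \overline{\underline{O}}(\xi)$ as $\xi \to 0^+$ uniformly in large $t$, i.e.\ that $w_\xi(0, t)$ is pinched between two positive constants.

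For the upper bound I would use the principal Dirichlet eigenfunction of the Laplacian, $W_+(\xi) = M\sin(\pi\xi/L_0)$. For $t$ sufficiently large, $\alpha > 0$ forces $\ddot{L}(t) > 0$ while $\xi(\xi - L_0) \le 0$ on $[0, L_0]$, so the zero-order coefficient in the $w$-equation is non-positive; since $W_+$ satisfies $\partial_t W_+ = 0$ and $\partial_\xi^2 W_+ = -(\pi/L_0)^2 W_+$, it is a stationary supersolution. Picking $M$ so that $W_+ \ge w(\cdot, T_0)$ at some fixed $T_0$ and invoking the parabolic maximum principle gives $w(\xi, t) \le (M\pi/L_0)\,\xi$ for all $t \ge T_0$, which is the desired upper bound.

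The hard part is the matching lower bound, and this is where the Airy function enters. My ansatz is a subsolution $W_-(\xi, t)$ which, near $\xi = 0$, equals $c_0\,\Ai(z_0 + \kappa(t)\xi)$ with $z_0$ the first (negative) zero of $\Ai$ and $\kappa(t)$ to be chosen; this vanishes at $\xi = 0$ and is positive just beyond it, with $\partial_\xi W_-(0, t) = c_0\kappa(t)\Ai'(z_0) > 0$. Since $\Ai$ is bounded away from zero for $z \ge 0$, I would continue this profile past the point $\xi = -z_0/\kappa(t)$ by replacing it with the tangent line to $\Ai$ at that position, so that the composite function dips linearly back down to zero before $\xi = L_0$ and hence satisfies the second Dirichlet condition as a subsolution. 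Using $\Ai''(z) = z\,\Ai(z)$, applying the $w$-operator to the Airy piece produces a coefficient linear in $\xi$ which I would balance against the leading $-L_0\,\xi$ part of the zero-order term $\tfrac{\ddot L L}{4DL_0^2}\xi(\xi - L_0)$; this forces $\kappa(t)^3$ to be of the same order as $|z_0|^{-1}\ddot{L} L^3 /(4D^2 L_0^2)$, making $\kappa(t)$ bounded above and below. The main obstacle will be verifying that the remaining residuals --- the time-derivative $\dot{\kappa}(t)\xi\Ai'$ of the Airy piece, the correction from replacing $\xi(\xi - L_0)$ by $-L_0\xi$, the time-dependence transmitted to the tangent-line piece, and the gluing terms at $\xi = -z_0/\kappa(t)$ --- all carry the correct sign to preserve the subsolution property for $t$ sufficiently large. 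The quantitative decay $\dot{\eta} = o(1/t)$, $\ddot{\eta} = o(1/t^2)$, $\dddot{\eta} = o(1/t^3)$ in \eqref{eq_eta} is designed precisely to control these error terms. Rescaling $c_0$ so that $W_-(\cdot, T_0) \le w(\cdot, T_0)$ (possible because the strong maximum principle gives $w_\xi(0, T_0) > 0$), the comparison principle then delivers the matching lower bound and the proof is complete.
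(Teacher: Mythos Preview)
Your overall architecture matches the paper's: the change of variables \eqref{eq_wu}, the reduction to proving $w(\xi,t)=\overline{\underline{O}}(\xi)$, a sine supersolution, and an Airy-plus-tangent subsolution. Your supersolution is in fact slightly simpler than the paper's (stationary rather than time-decaying) and is correct.

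The gap is in the subsolution, specifically the claim that the balancing ``forces $\kappa(t)^3$ to be of the same order as $|z_0|^{-1}\ddot{L} L^3 /(4D^2 L_0^2)$, making $\kappa(t)$ bounded above and below''. Under the hypotheses, $\ddot L(t)\sim 2\alpha/t^2$ and $L(t)\sim 2c_*t$, so $\ddot L L^3\sim 16\alpha c_*^3\,t\to\infty$; equivalently $P(t)=\ddot L L^3/(4D^2)\to\infty$. Any balancing of the linear-in-$\xi$ pieces forces $\kappa(t)^3$ to grow like $P(t)$, hence $\kappa(t)\sim t^{1/3}\to\infty$, not bounded. This has two consequences that break your argument as written. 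First, with a fixed $c_0$ the slope $\partial_\xi W_-(0,t)=c_0\kappa(t)\Ai'(z_0)\to\infty$, so $W_-$ cannot stay below $w$ near $\xi=0$; the paper cures this by inserting a prefactor $P(t)^{-1/3}$ in front of the Airy piece, which exactly cancels the growth of $\kappa$ and keeps the boundary slope equal to $\Ai'(c_1)/L_0$. Second, after the balancing there remains a constant-in-$\xi$ residual $-\dfrac{DL_0^2}{L^2}\kappa^2 z_0\,W_-$, which is strictly positive (since $z_0<0$) and of size $DP(t)^{2/3}/L(t)^2$; it is not controlled by the $\eta$-hypotheses and does not go away. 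The paper absorbs it by multiplying the whole subsolution by
\[
a(t)=\exp\!\left(\Bigl(\tfrac{\Ai(0)}{\Ai'(0)}+c_1\Bigr)\int_0^t \frac{D P(\zeta)^{2/3}}{L(\zeta)^2}\,d\zeta\right),
\]
and the key point is that $\int_0^\infty P^{2/3}/L^2<\infty$ (equivalently $\int_0^\infty \ddot L^{2/3}<\infty$), so $a(t)$ tends to a strictly positive limit and the lower bound survives. Without the $P^{-1/3}$ prefactor and the factor $a(t)$, the subsolution construction does not close.
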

\begin{remark}
Note that the conditions on $\eta(t)$ allow, for example, $\eta(t)=\eta_0=$constant, or $\eta(t)=(t+1)^k$ for $k<0$, but not things like $\eta(t)=\log\log(t)$ as $t\rightarrow\infty$.
\end{remark}

This theorem gives asymptotic bounds on $\psi(x,t)$ for $x$ within $O(1)$ of the moving boundary. Hence, it also bounds the asymptotic behaviour of the gradient at the moving boundary itself:
\begin{equation}
\frac{\partial\psi}{\partial x}\left(\frac{-L(t)}{2},t\right)=\overline{\underline{O}}((t+1)^{-\frac{3}{2}+\frac{\alpha c_{*}}{2D}})\qquad \textrm{ as }t\rightarrow\infty
\end{equation}
In particular, at the critical value
\begin{equation}
\alpha=\alpha_{crit} = \frac{3D}{c_{*}}
\end{equation}
we have that $\psi\left(\frac{-L(t)}{2}+y,t\right)= \overline{\underline{O}}(y)$ as $t\rightarrow\infty$, and that the gradient at the boundary is bounded above and below independently of time: $\frac{\partial\psi}{\partial x}\left(\frac{-L(t)}{2},t\right)=\overline{\underline{O}}(1)$.

The derivation of the $\frac{3D}{c_{*}}\log(t+1)$ term in this context is completely different from the proofs in the other settings in which such a term arises. In this case (of a linear equation on a finite interval with moving boundaries) our derivation of the term is relatively straightforward, or accessible, being based solely on explicit super- and sub-solutions to a linear equation. Moreover, the bulk of our proof is in fact taken up in showing that the function $w(\xi,t)$ is exactly of order $\xi$ (or $y/t$). The other factor, $t^{-\frac{1}{2}+\frac{\alpha c_*}{2D}}$, in the critical behaviour comes straight from the change of variables.
This observation, and the exact expression used in the change of variables (equation \eqref{eq_wu}), may therefore help to give insight into the source of the logarithmic term in other settings.

Recall that the function $w$ now satisfies
\begin{equation}\label{eq_wcrit}
\frac{\partial w}{\partial t} = D \frac{L_0^2 }{L(t)^2} \left( \frac{\partial^2 w}{\partial \xi^2}+P(t) \frac{\xi}{L_0}\left( \frac{\xi}{L_0}-1\right)\frac{w}{L_0^2}\right)  \qquad \textrm{in } 0< \xi< L_0
\end{equation}
\begin{equation}\label{eq_wcritBC}
w=0 \qquad \textrm\qquad\textrm{at } \xi=0 \textrm{ and } \xi=L_0
\end{equation}
where
\begin{equation}\label{eq_Pt}
P(t)= \frac{\ddot{L}(t)L(t)^3}{4D^2}
\end{equation}
The following two propositions give a supersolution and a subsolution for $w(\xi,t)$ under certain conditions on $P(t)$.
It is worth noting that Proposition \ref{proposition_supersol} and Proposition \ref{proposition_subsol} apply in general whenever $w(\xi,t)$ satisfies equations \eqref{eq_wcrit}, \eqref{eq_wcritBC} for any function $P(t)$ (satisfying the conditions of the proposition). They do not rely at all on the specific form of $P(t)$ that we are interested in here, given by equation \eqref{eq_Pt}.
In the case where $P(t)$ \emph{is} given by equation \eqref{eq_Pt}, the condition \eqref{eq_conditions2} in Proposition \ref{proposition_subsol} becomes simply
\begin{equation}
\int_0^{\infty} \ddot{L}(\zeta)^{2/3} d\zeta < \infty 
\end{equation}

\begin{proposition} (Supersolution)\label{proposition_supersol}

Let $w(\xi,t)$ satisfy equations \eqref{eq_wcrit}, \eqref{eq_wcritBC}. If $P(t)\geq 0$ then (up to multiplication by a constant) $w(\xi,t)\leq \overline{w}(\xi,t)$ where \begin{equation}\label{eq_overlinew}
\overline{w}(\xi,t) =\sin \left(\frac{\pi\xi}{L_0}\right) \exp\left( -\int_0^t \frac{D\pi^2}{L(\zeta)^2}d\zeta \right)
\end{equation}
Moreover, if
\begin{equation}\label{eq_conditions1}
\int_0^{\infty} \frac{1}{L(\zeta)^2} d\zeta < \infty
\end{equation}
then $w(\xi,t)=O(\xi)$ independently of time as $t\rightarrow \infty$,
in the sense that given $B_1\in (0,L_0)$, there exists $\beta_1$ such that
\begin{equation}\label{eq_wbeta1}
w(\xi,t)\leq \beta_1 \xi   \textrm{ as } t\rightarrow\infty, \textrm{ for all }0\leq\xi\leq B_1
\end{equation}
\end{proposition}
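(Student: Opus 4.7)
The plan is to verify directly that $\overline{w}(\xi,t)$ is a supersolution of \eqref{eq_wcrit}--\eqref{eq_wcritBC}, invoke the parabolic comparison principle to obtain $w\leq c\,\overline{w}$ for some constant $c>0$, and then extract the bound \eqref{eq_wbeta1} from the elementary inequality $\sin(\pi\xi/L_0)\leq \pi\xi/L_0$ combined with boundedness of the exponential factor under assumption \eqref{eq_conditions1}.

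First I would compute the derivatives of $\overline{w}$: namely $\partial_t\overline{w}=-(D\pi^2/L(t)^2)\,\overline{w}$ and $\partial_{\xi\xi}\overline{w}=-(\pi^2/L_0^2)\,\overline{w}$. Substituted into the right-hand side of \eqref{eq_wcrit}, the pure diffusion term $D(L_0^2/L^2)\partial_{\xi\xi}\overline{w}$ cancels $\partial_t\overline{w}$ exactly, leaving
\begin{equation*}
\partial_t \overline{w}-D\frac{L_0^2}{L^2}\left(\partial_{\xi\xi}\overline{w}+P(t)\frac{\xi}{L_0}\left(\frac{\xi}{L_0}-1\right)\frac{\overline{w}}{L_0^2}\right)=-D\frac{L_0^2}{L^2}P(t)\frac{\xi}{L_0}\left(\frac{\xi}{L_0}-1\right)\frac{\overline{w}}{L_0^2}.
\end{equation*}
This expression is non-negative because $P(t)\geq 0$, $\overline{w}\geq 0$, and $\xi(\xi-L_0)\leq 0$ for $\xi\in[0,L_0]$. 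Hence $\overline{w}$ satisfies the differential inequality of a supersolution. This is the key computational step and the main (though short) obstacle; everything else is routine.

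Next I would apply the parabolic comparison principle on $(0,L_0)\times[\epsilon,\infty)$. Both $w$ and $\overline{w}$ vanish on the lateral boundaries $\xi=0,L_0$, so it suffices to choose $c>0$ with $c\,\overline{w}(\cdot,\epsilon)\geq w(\cdot,\epsilon)$. Such a $c$ exists by standard parabolic regularization: for any $\epsilon>0$, $w(\cdot,\epsilon)$ is continuous on $[0,L_0]$ with zero boundary values, while $\overline{w}(\cdot,\epsilon)$ is a positive multiple of $\sin(\pi\xi/L_0)$, which is bounded below by a positive multiple of $\xi(L_0-\xi)$; hence the ratio $w/\overline{w}$ at time $\epsilon$ is bounded on $(0,L_0)$. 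The comparison principle then yields $w\leq c\,\overline{w}$ for all $t\geq\epsilon$, proving the first assertion.

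For the second assertion, under hypothesis \eqref{eq_conditions1} the exponential in the definition of $\overline{w}$ is bounded above by $1$ for all $t\geq 0$. Combined with $\sin(\pi\xi/L_0)\leq \pi\xi/L_0$ on $[0,L_0]$, this gives $\overline{w}(\xi,t)\leq \pi\xi/L_0$ uniformly in $t$, and consequently $w(\xi,t)\leq \beta_1\xi$ with $\beta_1=c\pi/L_0$. Since this holds on all of $[0,L_0]$, it holds in particular on $[0,B_1]$, giving \eqref{eq_wbeta1}. The only subtlety in the whole argument is the treatment of $L^2$ initial data via instantaneous smoothing, which is standard.
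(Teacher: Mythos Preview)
Your proof is correct and follows essentially the same approach as the paper's: verify directly that $\overline{w}$ satisfies the supersolution differential inequality (using $P(t)\geq 0$, $\overline{w}\geq 0$, and $\xi(\xi-L_0)\leq 0$), apply the parabolic comparison principle to get $w\leq c\,\overline{w}$, and then read off the $O(\xi)$ bound from the explicit form of $\overline{w}$. Your treatment is in fact slightly more detailed than the paper's---you spell out the existence of the constant $c$ via smoothing at a positive time and use the explicit inequality $\sin(\pi\xi/L_0)\leq\pi\xi/L_0$---but the argument is the same. One minor remark: the bound ``the exponential is $\leq 1$'' holds unconditionally (the exponent is nonpositive), so hypothesis \eqref{eq_conditions1} is not actually needed for the upper bound \eqref{eq_wbeta1}; the paper states it anyway, presumably for parallelism with the subsolution proposition where the analogous integrability condition is genuinely required.
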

\begin{proof}
The function $\overline{w}(\xi,t)$ satisfies the boundary conditions and, since $P(t)\geq0$, it satisfies the inequality
\begin{align}
\frac{\partial \overline{w}}{\partial t} &= D \frac{L_0^2 }{L(t)^2}\frac{\partial^2 \overline{w}}{\partial \xi^2}\nonumber \\
& \geq D \frac{L_0^2 }{L(t)^2} \left( \frac{\partial^2 \overline{w}}{\partial \xi^2}+P(t) \frac{\xi}{L_0}\left( \frac{\xi}{L_0}-1\right)\frac{\overline{w}}{L_0^2}\right) 
\end{align}
and so it is a supersolution for $w(\xi,t)$. Hence, up to multiplication by a constant, $w(\xi,t)\leq \overline{w}(\xi,t)$.
Moreover, if equation \eqref{eq_conditions1} holds,
then $\overline{w}(\xi,t)=O(\xi)$ (independently of time as $t\rightarrow\infty$) and so equation \eqref{eq_wbeta1} is proved.
\end{proof}

Next we construct a subsolution using the Airy function $\Ai$ 
and its tangent at the position $\Ai(0)$.
\begin{proposition} (Subsolution) \label{proposition_subsol}

Let $w(\xi,t)$ satisfy equations \eqref{eq_wcrit}, \eqref{eq_wcritBC}. If $P(t)\rightarrow\infty$ as $t\rightarrow\infty$ and $\dot{P}(t)\geq 0$, 
then (up to multiplication by a constant) $w(\xi,t)\geq\tilde{w}(\xi,t)= \underline{w}(\xi,t)a(t)$ where $\underline{w}(\xi,t)$ and $a(t)$ are given by equations \eqref{eq_underlinew} and \eqref{eq_at}.
Moreover, if
\begin{equation}\label{eq_conditions2}
\int_0^{\infty} \frac{P(\zeta)^{2/3}}{L(\zeta)^2} d\zeta < \infty
\end{equation}
then for $P(t)^{1/3} \frac{\xi}{L_0}$ sufficiently small, $w(\xi,t)$ can be bounded below by a positive multiple of $\xi$ (independently of $t$) as $t\rightarrow\infty$. In other words, for $B_0>0$ small enough, there exists $\beta_0>0$ such that
\begin{equation} \label{eq_wbeta0}
\beta_0 \xi \leq w(\xi,t) \textrm{ as } t\rightarrow\infty, \textrm{ for all }0\leq\xi\leq B_0P(t)^{-1/3}L_0 
\end{equation}
\end{proposition}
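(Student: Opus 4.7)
The plan is to build a subsolution of the form $\tilde w(\xi,t) = \underline{w}(\xi,t)\,a(t)$, in which $\underline{w}$ is a spatial profile assembled from a rescaled Airy function and its tangent line at $\Ai(0)$, and $a(t)$ is an amplitude determined by an ordinary differential inequality; condition \eqref{eq_conditions2} is then invoked precisely to show that $a(t)$ stays bounded away from zero as $t\to\infty$, so that the linear part of the profile survives in the limit.

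First I would pass to the self-similar variable $z = P(t)^{1/3}\xi/L_0$. Under this rescaling, the linear part of the coefficient in \eqref{eq_wcrit}, namely $-P\xi/L_0^3$, becomes exactly the $-z$ term of the Airy equation: a smooth function $g(z)$ satisfies $g_{\xi\xi} - P\xi g/L_0^3 = (P^{2/3}/L_0^2)(g_{zz} - z\,g)$. Note also that the quadratic remainder $P(\xi/L_0)^2/L_0^2$ in the full coefficient has a sign \emph{favourable} to the subsolution inequality (it adds a nonnegative multiple of $\underline{w}$ on the right), so throughout the verification one may legitimately replace the exact coefficient by its lower bound $-P\xi/L_0^3$. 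This reduces the verification to an analysis of the Airy operator in the $z$ variable, with $z \in [0, P^{1/3}]$.

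Next I would fix $\underline{w}$. The idea is to use the tangent $T(z) = \Ai(0) + \Ai'(0)z$ (a positive, affinely decreasing function on $[0,z_1]$, $z_1 = -\Ai(0)/\Ai'(0)$) on a neighbourhood of $z = 0$, and to glue it to (a suitable multiple of) $\Ai(z)$ further out, so that the resulting function is nonnegative on $[0,L_0]$, vanishes at $\xi = 0$ and $\xi = L_0$, and is linear in $\xi$ throughout the region $z \leq z_0 < z_1$. Because $\Ai(z) \geq T(z)$ for $z \geq 0$ (convexity of $\Ai$ there), the tangent piece can be made to lie below the Airy piece, and the minimum of the two yields a function which is a subsolution in the viscosity sense (the kink produces a downward jump of $\partial_\xi$, hence a nonnegative distributional $\partial_\xi^2$, which helps). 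After a trivial renormalisation by a power of $P^{1/3}$ one arranges $\underline{w}(\xi,t) = \overline{\underline{O}}(\xi)$ uniformly in $t$ on the linear region $\{0 \leq \xi \leq B_0 P(t)^{-1/3}L_0\}$.

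Inserting $\tilde w = \underline{w}\,a$ into \eqref{eq_wcrit} and using the Airy identity above, the subsolution inequality collapses to a pointwise condition of the form
$$\frac{\dot a(t)}{a(t)} \;\leq\; -C_1\,\frac{P(t)^{2/3}}{L(t)^2} \;+\; C_2\,\frac{\dot P(t)}{P(t)},$$
where the second term comes from differentiating $\underline{w}$ in $t$ (via the dependence of $z$ on $P$), and the hypothesis $\dot P\geq 0$ controls its sign on the appropriate side of the inequality. Choosing $a(t) = \exp\!\bigl(-C\!\int_0^t P(\zeta)^{2/3}/L(\zeta)^2\,d\zeta\bigr)$ (possibly with an extra factor coming from the $\dot P/P$ term) then makes the inequality hold; condition \eqref{eq_conditions2} guarantees $\inf_{t\geq 0} a(t) > 0$. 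The parabolic comparison principle applied to the linear equation \eqref{eq_wcrit}, together with a choice of multiplicative constants so that $\tilde w(\xi,0)\leq w(\xi,0)$, then yields $w \geq \tilde w$ globally, and on the linear region this is precisely the bound \eqref{eq_wbeta0}.

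The part I expect to be most delicate is the verification of the subsolution inequality across the glue point (the kink) \emph{and} the careful tracking of the two competing time-dependent factors on the right-hand side of the ODE for $a(t)$: the good dissipative term $-C_1 P^{2/3}/L^2$ and the error $\dot P/P$ coming from the $t$-dependence of the spatial profile through $z = P^{1/3}\xi/L_0$. Both must be shown to be integrable --- which is exactly the content of \eqref{eq_conditions2} once one observes that, for $P = \ddot L L^3/(4D^2)$, the relation $\dot P/P$ is controlled by $P^{2/3}/L^2$ under the standing monotonicity assumption $\dot P \geq 0$.
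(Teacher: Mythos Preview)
Your overall strategy---an Airy-based spatial profile times a decaying amplitude $a(t)$---matches the paper's, but the profile is assembled backward and this breaks the argument in two places. You put the tangent $T(z)=\Ai(0)+\Ai'(0)z$ near $z=0$ and the Airy piece further out; but $T(0)=\Ai(0)>0$, so $\underline{w}(0,t)\neq 0$ and the comparison on $[0,L_0]$ cannot even be set up (nor is $T$ ``linear in $\xi$'' in the sense you need for \eqref{eq_wbeta0}). The paper does the opposite: near $\xi=0$ it uses the \emph{shifted} function $P^{-1/3}\Ai\bigl(P^{1/3}\xi/L_0+c_1\bigr)$, where $c_1<0$ is the largest real zero of $\Ai$, so that $\underline{w}(0,t)=\Ai(c_1)=0$ automatically and $\underline{w}\sim(\Ai'(c_1)/L_0)\,\xi$ near $\xi=0$; the tangent at $\Ai(0)$ is then used on the far side to continue the profile down to zero, followed by zero padding out to $\xi=L_0$. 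Relatedly, your kink argument is the wrong way round: a \emph{minimum} of two pieces produces a downward jump in $\partial_\xi\underline{w}$, hence a \emph{negative} Dirac mass in $\partial_\xi^2\underline{w}$, which violates (not helps) the subsolution inequality $\partial_t\tilde w-D(L_0/L)^2\partial_\xi^2\tilde w-\ldots\leq 0$. The paper's only non-smooth junction is where the tangent meets zero, and there $\partial_\xi\underline{w}$ jumps \emph{upward} from $\Ai'(0)/L_0<0$ to $0$---a max-type kink, which is the correct direction for a weak subsolution.

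Your treatment of the $\dot P/P$ error is also off. You propose to absorb it into $a(t)$ and then claim $\int\dot P/P$ is controlled by $\int P^{2/3}/L^2$; but $\int_0^t \dot P/P=\log P(t)-\log P(0)\to\infty$ by hypothesis, so any such factor would force $a(t)\to 0$ and kill the lower bound. The paper never puts this term into $a(t)$. Instead it writes $\partial_t\underline{w}=\tfrac{\dot P}{3P}\bigl(-\underline{w}+\xi\,\partial_\xi\underline{w}\bigr)$ and uses that $\underline{w}$ is concave on the Airy region (since $\Ai''(x)=x\Ai(x)\leq 0$ for $x\in[c_1,0]$) together with $\underline{w}(0,t)=0$ to get $\xi\,\partial_\xi\underline{w}\leq\underline{w}$, hence $\partial_t\underline{w}\leq 0$ pointwise. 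Only the $P^{2/3}/L^2$ contribution survives into $a(t)$, and that is precisely condition \eqref{eq_conditions2}.
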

\begin{proof}
Let $c_1$ be the largest real zero of the Airy function $\Ai$. Note, for reference, the facts that $c_1<0$, $\Ai'(c_1)>0$, $\Ai(0)>0$, $\Ai'(0)<0$, and $\Ai''(0)=0$.
Define $\underline{w}(\xi,t)$ by:
\begin{equation}\label{eq_underlinew}
\underline{w}(\xi,t)=\begin{cases}\ds{\frac{1}{P(t)^{1/3}}}\Ai\left( P(t)^{1/3} \frac{\xi}{L_0} +c_1\right)\qquad\textrm{for }0\leq\xi\leq -c_1 P(t)^{-1/3}L_0 \textrm{: Region I}\\
\ds{\frac{1}{P(t)^{1/3}}}\left( \Ai(0)+\Ai'(0)\left( P(t)^{1/3} \frac{\xi}{L_0} +c_1\right)\right) \\
\qquad\textrm{for }  -c_1 P(t)^{-1/3}L_0  \leq \xi \leq -\left( \frac{\Ai(0)}{\Ai'(0)} +c_1  \right) P(t)^{-1/3}L_0 \textrm{: Region II}\\
0 \qquad \textrm{for } -\left( \frac{\Ai(0)}{\Ai'(0)} +c_1  \right) P(t)^{-1/3}L_0\leq \xi \leq L_0\textrm{: Region III}\\
\end{cases}
\end{equation}

Note that $\underline{w}$ is continuous and non-negative on $[0,L_0]$, and satisfies the boundary conditions. Furthermore, both $\frac{\partial \underline{w}}{\partial \xi}$ and $\frac{\partial^2 \underline{w}}{\partial \xi^2}$ are continuous across Regions I-II, including at the point where they meet, since at this point the left and right limits both give $\frac{\partial \underline{w}}{\partial \xi}=\frac{\Ai'(0)}{L_0}$ and $\frac{\partial^2 \underline{w}}{\partial \xi^2}=0$. In each Region I and Region II,
$\frac{\partial \underline{w}}{\partial t}$ satisfies
\begin{equation}
\frac{\partial \underline{w}}{\partial t} =\frac{\dot{P}(t)}{3P(t)}\left(-\underline{w} + \xi\frac{\partial \underline{w}}{\partial \xi}\right)
\end{equation}
and so it follows from the continuity of each term that $\frac{\partial \underline{w}}{\partial t}$ is also continuous across Regions I-II.

In Region I:
\begin{align}
\frac{\partial \underline{w}}{\partial t} -& D \frac{L_0^2 }{L(t)^2} \left( \frac{\partial^2 \underline{w}}{\partial \xi^2}+P(t) \frac{\xi}{L_0}\left( \frac{\xi}{L_0}-1\right)\frac{\underline{w}}{L_0^2}\right) \nonumber \\
= & -\frac{\dot{P}(t)}{3P(t)}\underline{w} + 
\frac{\dot{P}(t)}{3P(t)} \frac{\xi}{L_0} \Ai'\left( P(t)^{1/3} \frac{\xi}{L_0} +c_1  \right) \nonumber \\
& - \frac{DP(t)^{1/3}}{L(t)^2}\Ai''\left(  P(t)^{1/3} \frac{\xi}{L_0} +c_1 \right) - \frac{DP(t)}{L(t)^2}\frac{\xi^2}{L_0^2}\underline{w}+\frac{DP(t)}{L(t)^2}\frac{\xi}{L_0}\underline{w} \\
= & \frac{\dot{P}(t)}{3P(t)}\left(-\underline{w} + \xi\frac{\partial \underline{w}}{\partial \xi}\right)- \frac{D}{L(t)^2}P(t)^{2/3}\left(  P(t)^{1/3} \frac{\xi}{L_0} +c_1 \right)\underline{w}  \nonumber \\
& - \frac{DP(t)}{L(t)^2}\frac{\xi^2}{L_0^2}\underline{w}+\frac{DP(t)}{L(t)^2}\frac{\xi}{L_0}\underline{w} \\
=& \frac{\dot{P}(t)}{3P(t)}\left(-\underline{w} + \xi\frac{\partial \underline{w}}{\partial \xi}\right) -\frac{DP(t)^{2/3}}{L(t)^2}c_1\underline{w}  - \frac{DP(t)}{L(t)^2}\frac{\xi^2}{L_0^2}\underline{w} \label{eq_wxi}
\end{align}
Note that $\frac{\partial^2 \underline{w}}{\partial \xi^2}\leq 0$ in Region I, since $\Ai''(x)=x\Ai(x)\leq 0$ on $[c_1,0]$. Therefore, using $\underline{w}(0,t)=0$, it holds that
\begin{equation}
\xi\frac{\partial \underline{w}}{\partial \xi}(\xi,t) \leq \underline{w}(\xi,t) \qquad \textrm{in Region I}
\end{equation}
Thus equation \eqref{eq_wxi} together with the assumption that $P(t)\geq 0$ and $\dot{P}(t)\geq 0$ implies that, in Region I,
\begin{equation}
\frac{\partial \underline{w}}{\partial t} - D \frac{L_0^2 }{L(t)^2} \left( \frac{\partial^2 \underline{w}}{\partial \xi^2}+P(t) \frac{\xi}{L_0}\left( \frac{\xi}{L_0}-1\right)\frac{\underline{w}}{L_0^2}\right)  \leq 
-c_1\frac{DP(t)^{2/3}}{L(t)^2}\underline{w}
\end{equation}
In Region II, since $P(t)\geq 0$, $\dot{P}(t)\geq 0$, and $\Ai'(0)<0$,
\begin{align}
\frac{\partial \underline{w}}{\partial t} -& D \frac{L_0^2 }{L(t)^2} \left( \frac{\partial^2 \underline{w}}{\partial \xi^2}+P(t) \frac{\xi}{L_0}\left( \frac{\xi}{L_0}-1\right)\frac{\underline{w}}{L_0^2}\right)\nonumber\\
 = & -\frac{\dot{P}(t)}{3 P(t)}\underline{w}+ \frac{\dot{P}(t)}{3 P(t)}\frac{\xi}{L_0}\Ai'(0)-  \frac{DP(t)}{L(t)^2}\frac{\xi^2}{L_0^2}\underline{w} + \frac{DP(t)}{L(t)^2}\frac{\xi}{L_0}\underline{w} \\
\leq& \frac{DP(t)}{L(t)^2}\frac{\xi}{L_0}\underline{w} \\
\leq &\left( -\frac{\Ai(0)}{\Ai'(0)} -c_1  \right) \frac{DP(t)^{2/3}}{L(t)^2}   \underline{w}
\end{align}
This leads us to define $\tilde{w}(\xi,t)= \underline{w}(\xi,t)a(t)$ where
\begin{equation}\label{eq_at}
a(t)=\exp {\left(  \left( \frac{\Ai(0)}{\Ai'(0)} +c_1  \right)\int_0^t   \frac{DP(\zeta)^{2/3}}{L(\zeta)^2} d\zeta \right)}
\end{equation}
Then in Regions I-II, the function $\tilde{w}(\xi,t)$ is $C^2$ in $\xi$, $C^1$ in $t$ and it satisfies
\begin{equation}
\frac{\partial \tilde{w}}{\partial t} - D \frac{L_0^2 }{L(t)^2} \left( \frac{\partial^2 \tilde{w}}{\partial \xi^2}+P(t) \frac{\xi}{L_0}\left( \frac{\xi}{L_0}-1\right)\frac{\tilde{w}}{L_0^2}\right) \leq 0
\end{equation}
so it is a classical subsolution for $0\leq\xi\leq -\left( \frac{\Ai(0)}{\Ai'(0)} +c_1  \right) P(t)^{-1/3}L_0$ (Regions I-II).

It is clear that $\tilde{w}\equiv 0$ is also a classical subsolution in Region III.
At the point where Region II and Region III meet, $\tilde{w}$ is continuous, it is a classical subsolution on either side, and $\frac{\partial \tilde{w}}{\partial \xi}$ has a jump discontinuity from a negative value on the left (Region II) to zero on the right (Region III). It follows that $\tilde{w}(\xi,t)$ is a weak subsolution to the parabolic problem on $(0,L_0)$.
Therefore, up to multiplication by some constant,
\begin{equation}
\tilde{w}(\xi,t)\leq w(\xi,t)
\end{equation}

If equation \eqref{eq_conditions2} holds,
then $a(t)$ converges to a strictly positive value as $t\rightarrow\infty$. Then, since
\begin{equation}
\underline{w}(\xi,t) \sim \frac{\Ai'(c_1)}{L_0}\xi \qquad\textrm{as }P(t)^{1/3} \frac{\xi}{L_0}\rightarrow 0
\end{equation}
it follows that for $P(t)^{1/3} \frac{\xi}{L_0}$ sufficiently small, $\tilde{w}(\xi,t)$ can be bounded above and below by positive multiples of $\xi$ (independently of time as $t\rightarrow\infty$). In particular, for $B_0>0$ small enough, there exists $\beta_0>0$ such that
\begin{equation}
\beta_0 \xi \leq \tilde{w}(\xi,t) \textrm{ as } t\rightarrow\infty, \textrm{ for all }0\leq\xi\leq B_0P(t)^{-1/3}L_0 
\end{equation}
Equation \eqref{eq_wbeta0} follows and the proposition is proved.
\end{proof}
Next we use the super- and sub-solutions for $w(\xi,t)$ to prove Theorem \ref{theorem_crit}:
\begin{proof} (of Theorem \ref{theorem_crit})

Recall from equation \eqref{eq_Acrit} that
\begin{equation}\label{eq_Lcrit}
L(t)=2(c_*t - \alpha\log(t+1) -\eta(t))
\end{equation}
Thus, as $t\rightarrow\infty$ the function $\ds{P(t)=\frac{\ddot{L}(t)L(t)^3}{4D^2}}$ obeys
\begin{equation}
P(t)\sim \frac{4 \alpha c_{*}^3 }{D^2}t \rightarrow\infty \qquad \textrm{and} \qquad  \dot{P}(t) \sim \frac{4\alpha c_{*}^3 }{D^2} >0
\end{equation}
Moreover, since $L(t)\sim 2c_* t$ and $\ddot{L}(t)\sim 2\alpha t^{-2}$ as $t\rightarrow\infty$, it also holds that
\begin{equation}
\int_0^{\infty} \frac{1}{L(\zeta)^2} d\zeta < \infty \qquad \textrm{and} \qquad \int_0^{\infty} \ddot{L}(\zeta)^{2/3} d\zeta < \infty
\end{equation}
So, both Proposition \ref{proposition_supersol} and \ref{proposition_subsol} apply to this case, giving that for some positive constants $C_1$ and $C_2$, $C_1\underline{w}(\xi,t)a(t)\leq w(\xi,t)\leq C_2\overline{w}(\xi,t)$, and that
for $B_0>0$ small enough, there exist $0<\beta_0\leq\beta_1$ such that
\begin{equation}
\beta_0 \xi \leq \tilde{w}(\xi,t)\leq \beta_1\xi \textrm{ as } t\rightarrow\infty, \textrm{ for all }0\leq\xi\leq B_0P(t)^{-1/3}L_0 
\end{equation}
Hence, we have shown that $w(\xi,t)$ is exactly of order $\xi$:
\begin{equation}
w(\xi,t)=\overline{\underline{O}}(\xi)\qquad\textrm{ as }\xi=O(P(t)^{-1/3} )\rightarrow 0
\end{equation}
In terms of the original function $\psi(x,t)$, recall that
\begin{equation}
\psi(x,t)=u(\xi, t)=w(\xi, t)\left(\frac{L_0}{L(t)}\right)^{1/2}\exp{\left(f_0 t - \int\limits_0^ t \frac{\dot{L}(\zeta)^2 }{16D} d\zeta - \xi (\xi-L_0) \frac{\dot{L}(t)L(t)}{4DL_0^2}\right)} 
\end{equation}
and note that with $L(t)$ given by equation \eqref{eq_Lcrit},
\begin{align}
f_0 t - \int\limits_0^ t \frac{\dot{L}(\zeta)^2 }{16D} d\zeta &=
\frac{c_{*}^2}{4D}t - \int\limits_0^ t \left( \frac{c_{*}^2}{4D} -\frac{\alpha c_{*}}{2D(\zeta+1)} +O\left(\frac{1}{(\zeta+1)^2}\right) + O(\dot{\eta}(\zeta)) \right) d\zeta \\
&=\frac{\alpha c_{*}}{2D}\log(t+1) + O(1) \qquad \textrm{as } t\rightarrow\infty
\end{align}

Consider $x=\frac{-L(t)}{2}+y$ with $y=O(1)$. Then $\xi=\frac{yL_0}{L(t)}=O\left(\frac{1}{t+1}\right)$ is certainly $O(P(t)^{-1/3} )$ as $t\rightarrow\infty$, and so
\begin{align}
\psi\left(\frac{-L(t)}{2}+y,t\right) &= w\left(\frac{yL_0}{L(t)}, t\right)\left(\frac{L_0}{L(t)}\right)^{1/2}\exp{\left(f_0 t - \int\limits_0^ t \frac{\dot{L}(\zeta)^2 }{16D} d\zeta - \frac{y}{L(t)}\left(\frac{y}{L(t)}-1\right) \frac{\dot{L}(t)L(t)}{4D}\right)} \nonumber \\
&=\overline{\underline{O}}\left(\frac{y}{t+1}\right) \times \frac{1}{(t+1)^{1/2}} \times \exp{\left(\frac{\alpha c_{*}}{2D}\log(t+1) - \frac{y^2}{4D(t+1)} +\frac{y c_{*}}{2D}\right)} \nonumber \\
&= \overline{\underline{O}}\left( y (t+1)^{-\frac{3}{2}+\frac{\alpha c_{*}}{2D}} \exp{\left(\frac{y c_{*}}{2D} \right)}\right)
\end{align}
which concludes the proof of Theorem \ref{theorem_crit}.
\end{proof}

\subsection{Critical case in higher dimensions} \label{section_crit_Rn}
To conclude this section we note that a similar analysis can also be applied to a ball in $\mathbb{R}^n$.
\begin{theorem}\label{theorem_RcritRn}
Let $\psi$ satisfy
\begin{equation}
\frac{\partial \psi}{\partial t} = D \nabla^2 \psi+ f_0\psi  \qquad \textrm{in } \{ \vert {\bf x}\vert < R(t)\} \subset \mathbb{R}^n
\end{equation}
\begin{equation}
\psi=0 \qquad \textrm{on } \vert {\bf x}\vert= R(t)
\end{equation}
where
\begin{equation}
R(t)=c_{*}t-\alpha \log(t+1)-\eta(t)
\end{equation}
with $\alpha>0$, and $\eta$ satisfying equation \eqref{eq_eta}, and where $n\leq 3$. Then
\begin{equation}
\psi({\bf x}, t) = \overline{\underline{O}} (y (t+1)^{-1-\frac{n}{2}+\frac{\alpha c_{*}}{2D}}) \qquad \textrm{as }t\rightarrow\infty \textrm{, for }y = R(t)-\vert {\bf x} \vert=O(1)
\end{equation}
\end{theorem}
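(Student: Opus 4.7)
The plan is to follow the same super- and sub-solution strategy as in the proof of Theorem~\ref{theorem_crit}, now adapted to the radially symmetric setting on a ball. First I would apply the change of variables from equation~\eqref{eq_wRn} with $\mathbf{A}(t)\equiv\mathbf{0}$, setting $\mathbf{z}=\mathbf{x}R_0/R(t)$, to convert the problem into one for $w(\mathbf{z},t)$ on the fixed ball:
\begin{equation*}
\frac{\partial w}{\partial t} = D\,\frac{R_0^2}{R(t)^2}\,\nabla_{\mathbf{z}}^2 w + \frac{|\mathbf{z}|^2\,\ddot{R}(t)R(t)}{4DR_0^2}\,w, \qquad w\big|_{|\mathbf{z}|=R_0}=0.
\end{equation*}
From $R(t)=c_*t-\alpha\log(t+1)-\eta(t)$ one checks $\int_0^\infty R_0^2/R^2\,d\zeta<\infty$ and, after one integration by parts, $\int_0^t \ddot R(\zeta)R(\zeta)/(4D)\,d\zeta = \bigl(\alpha c_*/(4D)\bigr)\log(t+1)+O(1)$. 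Since the equation and the boundary condition are radially symmetric, it suffices to construct radial super- and sub-solutions; these bound the full solution with arbitrary non-negative initial data via the parabolic comparison principle.

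For the supersolution let $(\phi_1,\lambda_1)$ be the principal Dirichlet eigenpair of $-\nabla^2$ on the ball of radius $R_0$, so that $\phi_1$ is radial, positive, and vanishes linearly at the boundary. Because $|\mathbf{z}|^2\leq R_0^2$ and $\ddot R(t)\geq 0$ for large $t$, the function
\begin{equation*}
\overline w(\mathbf{z},t)=\phi_1(\mathbf{z})\exp\left(-D\lambda_1\int_0^t\frac{R_0^2}{R^2}d\zeta + \int_0^t\frac{\ddot R R}{4D}d\zeta\right)
\end{equation*}
is a supersolution, giving $w(\mathbf{z},t)\leq C(R_0-|\mathbf{z}|)(t+1)^{\alpha c_*/(4D)}$. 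For the subsolution I would imitate Proposition~\ref{proposition_subsol} in the radial variable $r=|\mathbf{z}|$, using $P_n(t)=\ddot R R^3/(4D^2)$: place the Airy piece $P_n^{-1/3}\Ai(P_n^{1/3}(R_0-r)/R_0+c_1)$ in a thin boundary layer, then its tangent line, then zero, and multiply by an exponential $a_n(t)$ chosen to absorb both the usual correction from Proposition~\ref{proposition_subsol} and the new constant-in-$r$ contribution $\ddot R R/(4D)$ to the potential. The restriction $n\leq 3$ enters in verifying the subsolution inequality: for $n=3$ the substitution $w=\tilde w/r$ annihilates the radial first-order term $\tfrac{n-1}{r}\partial_r$ exactly and reduces the computation to one dimension; for $n=2$ the substitution $w=r^{-1/2}\tilde w$ leaves only a bounded residual $D/(4r^2)$ in the boundary layer; and for $n=1$ no reduction is needed. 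In each case a short computation yields $w(\mathbf{z},t)\geq c(R_0-|\mathbf{z}|)(t+1)^{\alpha c_*/(4D)}$ on a boundary layer of width $O(P_n(t)^{-1/3})$.

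Finally I would translate back to $\psi$. At $|\mathbf{x}|=R(t)-y$ with $y=O(1)$ we have $R_0-|\mathbf{z}|=R_0 y/R(t)$ and $|\mathbf{z}|^2=R_0^2-2R_0^2 y/R+O(y^2/t^2)$, so
\begin{equation*}
\psi(\mathbf{x},t)=w(\mathbf{z},t)\left(\frac{R_0}{R(t)}\right)^{n/2}\exp\left(f_0 t-\frac{\dot R(t)R(t)|\mathbf{z}|^2}{4DR_0^2}\right).
\end{equation*}
Using $(R_0/R)^{n/2}\sim(t+1)^{-n/2}$ and $f_0 t-\dot R(t)R(t)/(4D)=\bigl(\alpha c_*/(4D)\bigr)\log(t+1)+O(1)$, the exponential contributes another factor $(t+1)^{\alpha c_*/(4D)}\exp\bigl(yc_*/(2D)\bigr)$. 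Combining with the bound $w=\overline{\underline{O}}\bigl((y/t)(t+1)^{\alpha c_*/(4D)}\bigr)$ yields $\psi(\mathbf{x},t)=\overline{\underline{O}}\bigl(y(t+1)^{-1-n/2+\alpha c_*/(2D)}\bigr)$, as claimed. The main obstacle is the subsolution step: the radial first-order term $\tfrac{n-1}{r}\partial_r$ and the non-vanishing boundary value $\ddot R R/(4D)$ of the potential both complicate the Airy construction, and it is precisely the assumption $n\leq 3$ that keeps the residual terms manageable.
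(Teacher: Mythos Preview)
Your proposal is correct and follows essentially the same route as the paper: transform to a fixed ball, use the principal Dirichlet eigenfunction as a supersolution, use the Airy construction divided by $r^{(n-1)/2}$ as a subsolution (with $n\leq 3$ ensuring the residual term $\tfrac{(n-1)(n-3)}{4r^2}\hat w$ has the right sign), and translate back.

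The one substantive difference is in the change of variables. You use the version from equation~\eqref{eq_wRn} with $|\mathbf z|^2$ in the exponent, which produces a potential proportional to $r^2$ and forces you to carry an extra factor $(t+1)^{\alpha c_*/(4D)}$ through both the super- and sub-solution bounds on $w$, then pick up another $(t+1)^{\alpha c_*/(4D)}$ from $f_0 t-\dot R R/(4D)$ in the back-translation. The paper instead shifts the exponent to $\tfrac{\dot R R}{4DR_0^2}(r^2-R_0^2)$, so that the resulting potential is proportional to $(r^2/R_0^2-1)$. Under $\xi=R_0-r$, $L_0=2R_0$, $L=2R$ this is \emph{exactly} the one-dimensional potential $(\xi/L_0)(\xi/L_0-1)P(t)/L_0^2$, so Proposition~\ref{proposition_subsol} applies verbatim and $W=\overline{\underline{O}}(R_0-r)$ with no extra time-dependent factor; the full power $(t+1)^{\alpha c_*/(2D)}$ then comes from $f_0 t-\int_0^t \dot R^2/(4D)\,d\zeta$ in one step. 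The two routes are equivalent (your two half-powers are the integration-by-parts identity $\int_0^t\ddot R R=\dot R R-\dot R(0)R(0)-\int_0^t\dot R^2$), but the paper's shift removes the ``constant-in-$r$ contribution'' you flag as the main obstacle and makes the reduction to the one-dimensional subsolution cleaner.
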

\begin{remark}
Hence, the `critical value' of $\alpha$, for which the solution behaves exactly as order $y$, is now
\begin{equation}
\alpha_{crit}=\frac{(2+n)D}{c_{*}}
\end{equation}
As in the one-dimensional case, this appears to match the coefficient of the logarithmic correction term in the nonlinear FKPP problem on $\mathbb{R}^n$, with compactly supported initial conditions (see \cite{Gar}, \cite{RoqRosRou}).
\end{remark}
\begin{proof}
The change of variables ${\bf z}=\frac{{\bf x}}{R(t)}R_0$ and
\begin{equation}
W({\bf z}, t) = \psi({\bf x},t)\left(\frac{R(t)}{R_0}\right)^{\frac{n}{2}}\exp{\left(-f_0 t +\int\limits_0^t \frac{\vert \dot{R}(\zeta)\vert^2 }{4D}d\zeta+\frac{ \dot{R}(t)R(t)}{4DR_0^2}(r^2-R_0^2)\right)} 
\end{equation}
leads to the equation
\begin{equation}
\frac{\partial W}{\partial t} = D \frac{R_0^2 }{R(t)^2} \left( \nabla^2 W +  Q(t)\left(\frac{r^2}{R_0^2}-1\right)\frac{W}{R_0^2} \right) \qquad \textrm{on }  r < R_0
\end{equation}
\begin{equation}
W=0 \qquad\textrm{at } r=R_0
\end{equation}
where $r=\vert {\bf z} \vert$ and 
\begin{equation}
Q(t)=\frac{\ddot{R}(t)R(t)^3}{4D^2}
\end{equation}
which satisfies $Q(t)>0$, $Q(t)\rightarrow\infty$, $\dot{Q}\geq 0$ as $t\rightarrow\infty$.

Let $h({\bf x})=h_0(\vert {\bf x}\vert)$ be the radially symmetric principal eigenfunction of 
\begin{align}
\lambda h &= -\nabla^2 h \qquad \textrm{for }\vert {\bf x}\vert < 1, \\
h({\bf x}) &=0 \qquad \textrm{at }\vert {\bf x}\vert=1
\end{align}
in the $n$-dimensional ball, with eigenvalue $\lambda_0$. Then the function
\begin{equation}
\overline{W}(\vert {\bf z}\vert),t)=h_0\left(\frac{\vert {\bf z}\vert}{R_0}\right)\exp{\left(\int_0^{t} -\frac{D\lambda_0}{R(\zeta)^2}d\zeta \right)}
\end{equation}
is a supersolution for $W$. Thus, up to multiplication by a constant, $W({\bf z},t) \leq \overline{W}(\vert {\bf z}\vert,t)$.

Next consider the function
\begin{equation}
w_1(r,t)=\tilde{w}(R_0-r,t)
\end{equation}
where $\tilde{w}(\xi,t)= \underline{w}(\xi,t)a(t)$ is given in equations \eqref{eq_underlinew} and \eqref{eq_at} with $L(t)=2R(t)$, $L_0=2R_0$, $\xi=R_0-r$, and $P(t)=\frac{\ddot{L}(t)L(t)^3}{4D^2}$. Note that
\begin{align}
\left(\frac{r^2}{R_0^2}-1\right)\frac{Q(t)}{R_0^2} &=
\left(\frac{\xi-R_0}{R_0}+1\right)\left(\frac{\xi-R_0}{R_0}-1\right)\frac{1}{4R_0^2D^2}\ddot{R}(t)R(t)^3 \nonumber \\
&=\frac{2\xi}{L_0}\left(\frac{2\xi}{L_0}-2\right) \frac{1}{ L_0^2D^2}\frac{\ddot{L}(t)L(t)^3}{16} \nonumber \\
&= \frac{\xi}{L_0}\left(\frac{\xi}{L_0}-1\right) \frac{1}{L_0^2}\frac{\ddot{L}(t)L(t)^3}{4D^2} \nonumber \\
&= \frac{\xi}{L_0}\left(\frac{\xi}{L_0}-1\right) \frac{P(t)}{L_0^2}
\end{align}
Therefore (for $t$ large enough), this function 
$w_1(r,t)$ satisfies
\begin{equation}\label{w_1_ineq}
\frac{\partial w_1}{\partial t} \leq D \frac{R_0^2 }{R(t)^2} \left( \frac{\partial^2 w_1}{\partial r^2} + Q(t) \left(\frac{r^2}{R_0^2}-1\right)\frac{w_1}{R_0^2}\right) \qquad \textrm{on }  r < R_0
\end{equation}
Next, using the form of the Laplacian in $n$ dimensions, we claim that the function 
\begin{equation}
\hat{w}(r,t)=\frac{w_1(r,t)}{r^{\frac{n-1}{2}} }
\end{equation}
is a subsolution in the $n$-dimensional case when $n\leq3$. Certainly the boundary condition (at $r=R_0$) and the non-singular condition (at $r=0$) will be satisfied, since $w_1(R_0,t)=0$, and $w_1(r,t)=0$ on some neighbourhood $[0,r_0)$ of $r=0$. Moreover,
\begin{align}
\frac{\partial \hat{w}}{\partial t} = \frac{1}{r^{\frac{n-1}{2}} }\frac{\partial w_1}{\partial t} &\leq 
\frac{1}{r^{\frac{n-1}{2}} } D \frac{R_0^2 }{R(t)^2}\left( \frac{\partial^2 w_1}{\partial r^2} + Q(t) \left(\frac{r^2}{R_0^2}-1\right)\frac{w_1}{R_0^2}\right) \\
&= D \frac{R_0^2 }{R(t)^2}\left( \frac{1}{r^{\frac{n-1}{2}} }\frac{\partial^2 }{\partial r^2} \left(r^{\frac{n-1}{2}}\hat{w} \right) +Q(t)\left(\frac{r^2}{R_0^2}-1\right)\frac{\hat{w}}{R_0^2} \right)\\
&= D \frac{R_0^2 }{R(t)^2} \left( \nabla^2 \hat{w} +  \left(\frac{n-1}{2}\right)\left(\frac{n-3}{2}\right)\frac{\hat{w}}{r^2} +Q(t)\left(\frac{r^2}{R_0^2}-1\right)\frac{\hat{w}}{R_0^2}\right) \\
&\leq D \frac{R_0^2 }{R(t)^2} \left( \nabla^2 \hat{w} + Q(t)\left(\frac{r^2}{R_0^2}-1\right)\frac{\hat{w}}{R_0^2}\right)
\end{align}
where the equality follows from the form of the Laplacian in $n$ dimensions, and the final inequality holds for $n=1$,$2$,$3$. Thus, $\hat{w}$ is a subsolution and, up to multiplication by a constant, we obtain $\hat{w}\leq W$.

We are interested in $y = R(t)-\vert {\bf x} \vert=O(1)$. This corresponds to $\vert {\bf z} \vert = r=R_0-\frac{yR_0}{R(t)}$ with $y=O(1)$, for which the above supersolution and subsolution ($C_1\hat{w}\leq W \leq C_2\overline{W}$) provide the bounds $w({\bf z}, t)=\overline{\underline{O}}(R_0-\vert {\bf z} \vert ) = \overline{\underline{O}}\left(\frac{yR_0}{R(t)}\right)$, independently of $t$. Therefore, the same calculations as in the one-dimensional case give:
\begin{align}
\psi({\bf x} ,t)=&w({\bf z}, t)\left(\frac{R_0}{R(t)}\right)^{n/2}\exp{\left(f_0 t -\int\limits_0^t \frac{\vert \dot{R}(\zeta)\vert^2 }{4D}d\zeta-\frac{ \dot{R}(t)R(t)}{4DR_0^2}(r^2-R_0^2)\right)} \nonumber \\
&=\overline{\underline{O}} \left(\frac{y}{t+1}\right) \times \frac{1}{(t+1)^{n/2}} \times \exp{\left(\frac{\alpha c_{*}}{2D}\log(t+1) - \frac{y^2}{4D(t+1)} +\frac{y c_{*}}{2D} \right)} \nonumber \\
&= \overline{\underline{O}}\left( y (t+1)^{-1-\frac{n}{2}+\frac{\alpha c_{*}}{2D}} \exp{\left(\frac{y c_{*}}{2D} \right)} \right)
\end{align}
\end{proof}
\begin{remark}
The proof of Theorem \ref{theorem_RcritRn} shows that, in any dimension $n$,
\begin{equation}
\psi({\bf x}, t) = O (y (t+1)^{-1-\frac{n}{2}+\frac{\alpha c_{*}}{2D}}) \qquad \textrm{as }t\rightarrow\infty \textrm{, for }y = R(t)-\vert {\bf x} \vert=O(1)
\end{equation}
This follows from the supersolution. However the subsolution $\hat{w}$ used in the proof only satisfies the required inequality when $n\leq3$. One may conjecture that the full result of Theorem \ref{theorem_RcritRn} actually applies in all dimensions $n$.
\end{remark}

\section*{Acknowledgements}
I am very grateful to my PhD supervisor, Professor Elaine Crooks. I am also grateful for an EPSRC-funded studentship (project reference 2227486).


\begin{thebibliography}{99}
\bibitem{Fis} The Wave of Advance of Advantageous Genes; R.A. Fisher; Annals of Eugenics. Vol.7, pp355--369, 1937.

\bibitem{KPP} Studies of the Diffusion with the Increasing Quantity of the Substance; Its Application to a Biological Problem; I.G. Petrowsky, A.N. Kolmogorov, and N.S. Piskunov; Moscow University Mathematics Bulletin. Vol.I:6, pp1--26, 1937.

\bibitem{PotLew} Climate and Competition: The Effect of Moving Range Boundaries on Habitat Invasibility; A.B. Potapov and M.A. Lewis; Bulletin of Mathematical Biology. Vol.66, pp975--1008, 2004.

\bibitem{RoqRoqBerKre} A population facing climate change: joint influences of Allee effects and environmental boundary geometry; L. Roques, A. Roques, H. Berestycki, and A. Kretzschmar; Population Ecology. Vol.50, pp215--225, 2008.

\bibitem{BerRos1} Reaction-diffusion equations for population dynamics with forced speed I- The case of the whole space; H. Berestycki and L. Rossi; Discrete and Continuous Dynamical Systems. Vol.21 (1), pp41--67, 2008.

\bibitem{BerRos2} Reaction-diffusion equations for population dynamics with forced speed II- Cylindrical-type domains; H. Berestycki and L. Rossi; Discrete and Continuous Dynamical Systems. Vol.25 (1), pp19--61, 2009.

\bibitem{BerDieNagZeg} Can a Species Keep Pace with a Shifting Climate?; H. Berestycki, O. Diekmann, C.J. Nagelkerke, and P.A. Zegeling; Bulletin of Mathematical Biology. Vol.71, No.399, pp399--429, 2009.

\bibitem{BerDesDie} Can climate change lead to gap formation?; H. Berestycki, L. Desvillettes, and O. Diekmann; Ecological Complexity. Vol.20, pp264--270, 2014.

\bibitem{BouNad} A variational approach to reaction diffusion equations with forced speed in dimension 1; J. Bouhours and G. Nadin; Discrete and Continuous Dynamical Systems. Vol.35 (5), pp1843--1872, 2015.

\bibitem{AlfBerRao} The effect of climate shift on a species submitted to dispersion, evolution, growth and nonlocal competition; M. Alfaro, H. Berestycki, and G. Raoul; SIAM Journal of Mathematical Analysis. Vol.49, No.1, pp562--596, 2017.

\bibitem{WuWanZou} Spatial-temporal dynamics of a Lotka-Volterra competition model with nonlocal dispersal under shifting environment; C. Wu, Y. Wang, and X. Zou; Journal of Differential Equations. Vol.267 (8), pp4890--4921, 2019.

\bibitem{BouGil} Spreading and Vanishing for a Monostable Reaction-Diffusion Equation with Forced Speed; J. Bouhours and T. Giletti; Journal of Dynamics and Differential Equations. Vol.31, pp247--286, 2019.

\bibitem{AroWei_1} Nonlinear diffusion in population genetics, combustion, and nerve pulse propagation; D.G. Aronson and H.F. Weinberger; Goldstein J.A. (eds) Partial Differential Equations and Related Topics (Lecture Notes in Mathematics) (Springer). Vol.446, pp5--49, 1975.

\bibitem{Sim} Exact Solutions of Linear Reaction-Diffusion Processes on a Uniformly Growing Domain: Criteria for Successful Colonization; M.J. Simpson; PLOS One. Vol.10 (2), pp1--11, 2015.

\bibitem{SimShaMorBak} Exact Solutions of Coupled Multispecies Linear Reaction-Diffusion Equations on a Uniformly Growing Domain; M. J. Simpson, J.A. Sharp, L.C. Morrow, and R.E. Baker; PLOS One. Vol.10 (9), pp1--16, 2015.

\bibitem{JanRoy} A class of exactly solvable Schr{\"{o}}dinger equation with moving boundary condition; T.K. Jana and P. Roy; Physics Letters A. Vol.372 (14), pp2368--2373, 2008.

\bibitem{SuaSusVeg_1} The Riccati differential equation and a diffusion-type equation; E. Suazo, S.K. Suslov, and J.M. Vega-Guzm\'{a}n; New York Journal of Mathematics. Vol.17a, pp225--244, 2011.

\bibitem{SuaSusVeg_2} The Riccati System and a Diffusion-Type Equation; E. Suazo, S.K. Suslov, and J.M. Vega-Guzm\'{a}n; Mathematics. Vol.2 (2), pp96--118, 2014.

\bibitem{Bra_1} Maximal displacement of branching Brownian motion; M.D. Bramson; Communications on Pure and Applied Mathematics. Vol.31 (5), pp531--581, 1978.

\bibitem{Bra_2} Convergence of solutions of the Kolmogorov equation to travelling waves; M.D. Bramson; Memoirs of the American Mathematical Society. Vol.44 (285), 1983.

\bibitem{HamNolRoqRyz_2} A short proof of the logarithmic Bramson correction in Fisher-KPP equations; F. Hamel, J. Nolen, J-M. Roquejoffre, and L. Ryzhik; Networks and Heterogeneous Media. Vol.8 (1), pp275--289, 2013.

\bibitem{Gar} Location of Wave Fronts for the Multi-Dimensional K-P-P Equation and Brownian First Exit Densities; J. G{\"{a}}rtner; Mathematische Nachrichten. Vol.105 (1), pp317--351, 1982.

\bibitem{RoqRosRou} Sharp large time behaviour in $N$-dimensional Fisher-KPP equations; J-M. Roquejoffre, L. Rossi, and V. Roussier-Michon; Discrete and Continuous Dynamical Systems - A. Vol.39 (12), pp7265--7290, 2019.

\bibitem{JBer} Exact solution and precise asymptotics of a Fisher-KPP type front; J. Berestycki, \'{E}. Brunet, and B. Derrida; Journal of Physics A: Mathematical and Theoretical. Vol.51 (3), pp1--21, 2018.
\end{thebibliography}
\end{document}